\font\ttten=cmtt10
\newdimen\outputBaseLineSkip
\newskip\beginOutputSkip
\newskip\endOutputSkip
\def\looserOutput#1{%
  \advance\beginOutputSkip by #1
  \advance\endOutputSkip by #1
}
\def\tighterOutput#1{%
  \advance\beginOutputSkip by -#1
  \advance\endOutputSkip by -#1
}
\def\beginOutput{%
    \par
    \penalty -150
    % \vskip \beginOutputSkip
    \penalty -150
    \begingroup
      \def\\{%
          \leavevmode
          \hss
          \endgraf
          \penalty 150
          }
      % \tteight
      \ttten
      % \baselineskip = \outputBaseLineSkip
      \parindent = 24pt
      \def\${\char`\$}
      \def\{{\char`\{}
      \def\}{\char`\}}
      \catcode`\_=\the\catcode`a
      \catcode`\^=\the\catcode`a
      \catcode`\#=\the\catcode`a
      \catcode`\~=\the\catcode`a
      \catcode`\&=\the\catcode`a
      \parskip=0pt
      \lineskip=0pt
      \obeyspaces
      }
\def\endOutput{%
    \endgroup
    \par
    \penalty -150
    % \vskip \endOutputSkip
    \penalty -150
    \noindent}
\theoremstyle{definition}
\newtheorem{defin}{Definition}[section]
\newtheorem{ex}[defin]{Example}
\newtheorem{remark}[defin]{Remark}
\theoremstyle{plain}
\newtheorem{lem}[defin]{Lemma}
\newtheorem{prop}[defin]{Proposition}
\newtheorem{thm}[defin]{Theorem}
\newtheorem*{thm*}{Theorem}
\newtheorem{cor}[defin]{Corollary}
\newtheorem*{cor*}{Corollary}
\newtheorem*{quest*}{Question}
\newtheorem*{con*}{Conjecture}
\newtheorem{prob}[defin]{Problem}
\newcommand{\Res}{\operatorname{Res}}
\newcommand{\conv}{\operatorname{conv}}
\newcommand{\codim}{\operatorname{codim}}
\newcommand{\ord}{\operatorname{ord}}
\newcommand{\adj}{\operatorname{adj}} %adjugate matrix
\newcommand{\rk}{\operatorname{rk}}
\newcommand{\crk}{\operatorname{crk}}
\newcommand{\nl}{\operatorname{nl}}
\newcommand{\Span}{\operatorname{span}}
\newcommand{\sing}{\operatorname{sing}}
\newcommand{\vol}{\operatorname{vol}}
\newcommand{\dd}{\text{d}}
\newcommand{\cF}{{\mathcal F}}
\newcommand{\cH}{{\mathcal H}}
\newcommand{\cI}{{\mathcal I}}
\newcommand{\cL}{{\mathcal L}}
\newcommand{\cR}{{\mathcal R}}
\newcommand{\cT}{{\mathcal T}}
\newcommand{\cV}{{\mathcal V}}
\newcommand{\C}{{\mathbb C}}
\newcommand{\R}{{\mathbb R}}
\newcommand{\pp}{\mathbb{P}}
\newcommand{\N}{{\mathbb N}}
\title[Geometry of Adjoint Hypersurfaces]{Geometry of Adjoint Hypersurfaces for Polytopes}
\author{Clemens Brüser}
\address{Technische Universit\"at Dresden, Germany} 
\email{clemens.brueser@tu-dresden.de}
\author{Julian Weigert}
\address{Universit\"at Leipzig and Max Planck Institute for Mathematics in the Sciences, Germany}
\email{julian.weigert@mis.mpg.de}
\begin{document}

\subjclass[2020]{Primary: 14N20, 52B11, 52B40 14Q30, 14J70}
%52B05 or 52B99 polytopes and polyhedra (maybe 52B11?)
%14N20 Configurations and arrangements of linear subspaces
%52B40 Matroids in convex geometry
%14Q30 Computational real algebraic geometry
%14J70 Hypersurfaces and algebraic geometry

\begin{abstract}
 In this article we prove that the adjoint polynomial of arbitrary convex polytopes is up to scaling uniquely determined by vanishing to the right order on the polytopes residual arrangement. This answers a problem posed by Kohn and Ranestad \cite[Problem 2]{KohnRanestad2019AdjCurves} and generalizes their main Theorem \cite[Theorem 1]{KohnRanestad2019AdjCurves} to non-simple polytopes. We furthermore prove that the adjoint polynomial is already characterized by vanishing to the right order on a zero-dimensional subset of the residual arrangement.
\end{abstract}
\maketitle

\section{Introduction}

The study of adjoint polynomials of convex polygons was first introduced in \cite{Wachspress1975Adjoints} as a means of studying finite element methods. He also gave a framework for studying adjoints of three-dimensional objects, an idea that was later picked up by \cite{Warren1996Adjoints}. Recent approaches have additionally been motivated by mathematical physics, and a widely applicable definition of adjoint polynomials employs the theory of positive geometries \cite{ArkanihamedBaiLam2017PosGeom, Gaetz2020Adjoints, Lam2022PosGeom}.

Let $P\subseteq \R^n$ be a full-dimensional convex polytope which contains the origin as an interior point. We write \begin{align}
\label{eq:polar_dual}
    P^\circ \coloneqq \left\{x\in (\R^n)^\vee\middle| \forall y \in P: \sum_{i=1}^nx_iy_i\geq -1\right\}
\end{align}
for the polar dual of $P$. Furthermore fix an arbitrary triangulation $\cT$ of $P^\circ$. Up to duality Warren defined the adjoint of $P$ as follows \cite{Warren1996Adjoints}.
\begin{defin}
    The \emph{adjoint of $P$} is the polynomial \begin{align*}
        \adj_P(X):=\sum_{\sigma\in \cT}\vol(\sigma)\prod_{v\in V(P^\circ)\setminus V(\sigma)}l_v(X)\in \R[X_0,\ldots,X_n]
    \end{align*}
    where $l_v(X):=\left(X_0+\sum_{i=1}^nv_iX_i\right)$ and $V(P^\circ),V(\sigma)$ denote the vertices of $P^\circ$ and $\sigma$ respectively.
\end{defin}
Warren proved that this definition is independent of the choice of the triangulation $\cT$ \cite[Theorem 4]{Warren1996Adjoints}.
Clearly the adjoint is a homogeneous polynomial of degree $d-n-1$ where $d=|V(P^\circ)|$ is both the number of vertices of $P^\circ$ and the number of facets of $P$. In this article we study the projective hypersurface $A_P:=\{x\in \pp^n\mid \adj_P(x)=0\}$ cut out by this polynomial.

Denote by $\cH_P$ the collection of all projectivized facet hyperplanes of $P$. These are precisely the hyperplanes described by $l_v(X)=0$ for $v\in V(P^\circ)$. The \emph{residual arrangement} $\cR(P)$ is the union of all linear spaces in $\pp^n$ that arise as intersections of facets in $\cH_P$ and do not contain any face of $P$. In \cite[Proposition 2]{KohnRanestad2019AdjCurves} Kohn and Ranestad show that the residual arrangement $\cR(P)$ is contained in the adjoint hypersurface $A_P$. If additionally $\cH_P$ is simple, i.e. if no point $x \in \pp^n$ belongs to more than $n$ hyperplanes in $\cH_P$, then \cite[Theorem 1]{KohnRanestad2019AdjCurves} states that this property also characterizes the adjoint:
The adjoint hypersurface $A_P$ is the unique degree $d-n-1$ hypersurface which contains $\cR(P)$. In this way they provide a geometric characterization of the adjoint $\adj_P(X)$ (up to scaling) via interpolation for polytopes with simple arrangements.  

While slightly perturbing the facets of any polytope will make its hyperplane arrangement simple, many interesting polytopes such as cubes, permutohedra or associahedra, fail to have this property. The results from \cite{KohnRanestad2019AdjCurves} can still be employed by expressing such polytopes as limits of polytopes with simple arrangement. However, this limiting process changes the combinatorial type of the polytopes, making it hard to deduce properties of the adjoint hypersurface of $P$ from those of its approximations. In view of this problem the authors of \cite{KohnRanestad2019AdjCurves} pose the following problem.
\begin{prob}(\cite[Problem 2]{KohnRanestad2019AdjCurves})
     Find a general definition of the adjoint hypersurface of a polytope which does not
depend on the simplicity of its hyperplane arrangement and does not involve limits.
\end{prob}

We provide an answer to this problem by involving singularities of the adjoint hypersurface. Loosely speaking we prove that for any polytope $P$ the adjoint hypersurface can be described as the unique degree $d-n-1$ hypersurface which vanishes at a prescribed set of points and has singularities of a prescribed order at prescribed points. Let $f\in \C[X_0,\ldots,X_n]$ be a homogeneous polynomial and let $x\in \pp^n$ be a point. The order of vanishing of $f$ at $x$, denoted $\mu_x(f)$ is the largest integer $k$ such that applying any composition of less than $k$ partial derivatives to $f$ yields a polynomial that still vanishes at $x$. All smooth points of the variety $\cV(f)$ have order one while singular points have a higher order. Points which don't lie on the variety $\cV(f)$ have order zero. 

Let $\cR_0(P)$ be the finite set of all points in $\pp^n$ that can be described as the unique point in the intersection of some hyperplanes in $\cH_P$. We call $\cR_0(P)$ the \emph{point residual} and notice that - in contrast to the residual arrangement of Kohn and Ranestad - the vertices of $P$ always belong to $\cR_0(P)$. For $x\in \cR_0(P)$ we define the order of $x$ with respect to $P$ as \begin{align*}
    \ord_P(x):=\begin{cases}
        |\{H\in \cH_P\mid x\in H\}|-n &\text{if }x\text{ is a vertex of }P \\
         |\{H\in \cH_P\mid x\in H\}|-n+1 &\text{else}
    \end{cases}
\end{align*}
Our main theorem states that the adjoint hypersurface is the unique degree $d-n-1$ hypersurface which passes through each point in $\cR_0(P)$ with vanishing order at least $\ord_P(x)$.
\begin{thm}
    \label{thm:intro_main_thm}
    The adjoint polynomial $\adj_P(X)$ is the up to scaling unique homogeneous degree $d-n-1$ polynomial satisfying for each $x\in \cR_0(P)$ the vanishing condition \begin{align*}
        \mu_x(f)\geq \ord_P(x).
    \end{align*}
\end{thm}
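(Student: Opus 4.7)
The plan is to split the theorem into \emph{existence}, that $\adj_P$ itself satisfies $\mu_x(\adj_P)\geq\ord_P(x)$ for every $x\in\cR_0(P)$, and \emph{uniqueness}, that these conditions determine a degree $d-n-1$ homogeneous polynomial up to scaling, and then combine the two.

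For existence, fix any triangulation $\cT$ of $P^\circ$ and a point $x\in\cR_0(P)$, and set $S_x:=\{v\in V(P^\circ)\mid l_v(x)=0\}$ with $k:=|S_x|$. The summand of Warren's formula indexed by $\sigma\in\cT$ is $\vol(\sigma)\prod_{v\in V(P^\circ)\setminus V(\sigma)}l_v$, which vanishes at $x$ to order $|S_x\setminus V(\sigma)|=k-|S_x\cap V(\sigma)|$. The first observation is that the $n+1$ vertices of a simplex are affinely independent, so the linear forms $\{l_v\}_{v\in V(\sigma)}$ have no common zero in $\pp^n$; this forces $|S_x\cap V(\sigma)|\leq n$ and hence $\mu_x(\adj_P)\geq k-n$, already giving $\mu_x(\adj_P)\geq\ord_P(x)$ when $x$ is a vertex of $P$. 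To promote this bound by one when $x$ is \emph{not} a vertex of $P$, I would extract the order-$(k-n)$ leading part of each summand with $|S_x\cap V(\sigma)|=n$. Such a summand corresponds to a simplex whose size-$n$ face $\tau=V(\sigma)\cap S_x$ is contained in $S_x$; since $x$ is not a vertex of $P$, the vertices of $\tau$ do not lie on a common facet of $P^\circ$, so $\tau$ must be an interior face of the triangulation, shared by exactly two simplices $\sigma,\sigma'$. The plan is to show that their leading contributions cancel, which reduces to a linear identity among $\vol(\sigma),\vol(\sigma')$ and values $l_v(x)$ that can be extracted from the invariance of $\adj_P$ under bistellar flips of $\cT$.

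For uniqueness, I would induct on the dimension $n$, with the base case $n=1$ amounting to Hermite interpolation. For the inductive step, let $f$ be any homogeneous polynomial of degree $d-n-1$ satisfying the hypotheses. For each $w\in V(P^\circ)$ I would restrict $f$ to the facet hyperplane $H_w=\{l_w=0\}$ containing the facet $F_w$ of $P$, and first verify that each $y\in\cR_0(F_w)$ viewed in $H_w\cong\pp^{n-1}$ lies in $\cR_0(P)$ with $\ord_P(y)\geq\ord_{F_w}(y)$; the only delicate case — when $y$ is a non-vertex of $F_w$ but a vertex of $P$ — is ruled out since any vertex of $P$ lying in $F_w$ is automatically a vertex of $F_w$. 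Therefore $f|_{H_w}$ satisfies the analogous vanishing hypotheses for the polytope $F_w$, and by induction $f|_{H_w}=\lambda_w\cdot\adj_P|_{H_w}$ for some scalar $\lambda_w$; here I appeal to the standard restriction identity $\adj_P|_{H_w}=\adj_{F_w}\cdot\prod_{w':\,F_{w'}\cap F_w=\emptyset}l_{w'}|_{H_w}$ (up to a nonzero constant) for Warren adjoints. In particular $l_w$ divides $f-\lambda_w\adj_P$. To conclude, I would show that all $\lambda_w$ coincide: for any two facets $F_w,F_{w'}$ of $P$ meeting in a codimension-$2$ face, the subspace $H_w\cap H_{w'}$ contains that face, and since $A_P$ avoids the interior of $P$, the restriction $\adj_P|_{H_w\cap H_{w'}}$ is nonzero; comparing $f|_{H_w\cap H_{w'}}=\lambda_w\adj_P|_{H_w\cap H_{w'}}=\lambda_{w'}\adj_P|_{H_w\cap H_{w'}}$ forces $\lambda_w=\lambda_{w'}$. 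Connectedness of the facet-adjacency graph of $P$ produces a single $\lambda$, after which $\prod_w l_w$ divides $f-\lambda\adj_P$; since this product has degree $d>d-n-1$, the difference vanishes and $f=\lambda\adj_P$.

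The main obstacle is the cancellation step in existence at non-vertex points of $\cR_0(P)$: while affine independence of simplex vertices handles the vertex case immediately, the extra order of vanishing forces one to control sums over paired simplices in $\cT$, exploiting the invariance of $\adj_P$ under flips beyond mere well-definedness. A conceptually cleaner alternative is to work in the language of positive geometries, where the higher-order vanishing of $\adj_P$ at non-simple arrangement points is precisely what is required for the canonical form $\adj_P\,\Omega/\prod_v l_v$ to have only logarithmic poles along facets. A secondary technical point in the uniqueness step is establishing the restriction formula for $\adj_P|_{H_w}$ with correctly identified linear factors, which is implicit in the simple case of \cite{KohnRanestad2019AdjCurves}.
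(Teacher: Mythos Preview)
Your existence argument is sound and actually different from the paper's. The pairwise cancellation you describe does hold: writing $\tau=\{v_1,\dots,v_n\}$ and choosing the representative of $x$ to be the cofactor vector of the matrix with rows $(1,v_i)$, cofactor expansion gives $\det\bigl((1,v_1),\dots,(1,v_n),(1,u)\bigr)=l_u(x)$, so the signed volumes of the two simplices $\tau\cup\{u\}$ and $\tau\cup\{u'\}$ on opposite sides of $\conv(\tau)$ are proportional to $l_u(x)$ and $l_{u'}(x)$ with opposite signs, whence $\frac{\vol(\sigma)}{l_u(x)}+\frac{\vol(\sigma')}{l_{u'}(x)}=0$. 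The paper instead picks, for each $x$, a pulling triangulation of $P^\circ$ (Lemma~\ref{lem:costructing_triangulations}) that makes every summand vanish to the higher order individually; your argument is triangulation-independent but specific to zero-dimensional flats, which is all Theorem~\ref{thm:intro_main_thm} needs.

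The uniqueness argument has a real gap. When you restrict $f$ to $H_w$ and invoke the inductive hypothesis for the facet $F_w$, there is a degree mismatch: $f|_{H_w}$ has degree $d-n-1$, whereas Theorem~\ref{thm:intro_main_thm} for $F_w$ concerns polynomials of degree $d'-n$ with $d'=|\cH_{F_w}|$. The difference is the number of $w''$ with $F_{w''}\cap F_w$ not a codimension-$2$ face, and to apply induction you must first show that $l_{w''}|_{H_w}$ divides $f|_{H_w}$ for each such $w''$, i.e.\ that $f$ vanishes identically on the $(n-2)$-plane $H_w\cap H_{w''}$. Your hypotheses, however, only control $f$ at the \emph{points} of $\cR_0(P)$; nothing you have written propagates this to positive-dimensional strata, and the restriction identity you quote holds for $\adj_P$, not a priori for $f$. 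This propagation is exactly the content of the paper's Theorem~\ref{thm:pointresidual}, proved by a B\'ezout-type count (Corollary~\ref{cor: van_order_Mult}) showing that the orders of $f$ at the points of $\cR_0(P)$ lying on $H_w\cap H_{w''}$ already exceed $\deg f$. Once that step is supplied, your restrict--induct--connectedness scheme does go through and is then close to the paper's Lemma~\ref{crucial_Lemma} and Proposition~\ref{dealing with scalings} with the positive-geometry packaging removed; but as written the key step is missing.
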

For special polytopes $P$ and special points $x \in \cR_0(P)$ the inequality above can be strict, we provide some examples in Section \ref{sec: exmpls}.

In order to prove Theorem \ref{thm:intro_main_thm}, we first prove a different version, which is closer to the result of \cite[Theorem 1]{KohnRanestad2019AdjCurves}. This version does not only use points in $\cR_0(P)$, but also includes higher dimensional linear components and postulates vanishing of the adjoint to a certain order along them. For this we extend the definition of the order of $x\in \cR_0(P)$ to linear components of arbitrary dimension that can be cut out by hyperplanes in $\cH_P$, see Definition \ref{def:order_general}. Writing $\cL(P)$ for the collection of all such linear projective spaces our result then reads as follows.
\begin{thm}
\label{thm:intro_main_allcomponents}
     Let $P\subseteq \pp^n$ be a full-dimensional polytope with $d$ facets. There exists an up to scaling unique homogeneous polynomial $f\in \C[X_0,\ldots,X_n]$ of degree $d-n-1$ such that $f$ vanishes along every linear projective space $L\in \cL(P)$ with order at least
 \begin{equation*}
    \mu_L(f)\geq \ord_P(L).
 \end{equation*}
 This polynomial agrees up to scaling with the adjoint $\adj_P(X)$.
\end{thm}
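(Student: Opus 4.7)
The plan is to prove the theorem by induction on the dimension $n$ of the polytope $P$. The base case $n=1$ is immediate: $P$ is then a segment with $d=2$ facets, the required degree is $d-n-1 = 0$, and the statement reduces to uniqueness of a nonzero constant up to scaling. For the inductive step there are two tasks: verifying that $\adj_P$ itself satisfies the vanishing conditions (existence) and showing that nothing else does up to scaling (uniqueness).

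Existence is handled through Warren's explicit formula $\adj_P = \sum_{\sigma \in \cT} \vol(\sigma)\prod_{v \in V(P^\circ) \setminus V(\sigma)} l_v$. Given $L \in \cL(P)$, set $S_L := \{v \in V(P^\circ) \mid H_v \supseteq L\}$, where $H_v = \{l_v = 0\}$. Because $l_v$ vanishes on $L$ precisely when $v \in S_L$, each summand vanishes on $L$ to order $|S_L \setminus V(\sigma)|$, which gives
\[
\mu_L(\adj_P) \;\geq\; \min_{\sigma \in \cT} |S_L \setminus V(\sigma)| \;=\; |S_L| - \max_{\sigma \in \cT} |S_L \cap V(\sigma)|.
\]
The combinatorial content is to identify this lower bound with $\ord_P(L)$. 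The maximum $\max_\sigma |S_L \cap V(\sigma)|$ measures the largest number of vertices in $S_L$ which can lie simultaneously in a single $n$-simplex of $\cT$; when $L$ is a face of $P$, these vertices are forced to lie on the polar-dual face $L^\vee \subseteq P^\circ$ of dimension $n - \dim L - 1$, so at most $n - \dim L$ of them appear, whereas when $L$ is not a face of $P$, $S_L$ spans strictly more than its dimension and one additional vertex is available. This recovers the case distinction $-n$ versus $-n+1$ in the definition of $\ord_P$.

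For uniqueness, let $f$ be a nonzero polynomial of degree $d-n-1$ satisfying the hypotheses, fix a facet $F$ of $P$ with supporting hyperplane $H_F$, and study $f|_{H_F}$. Each facet $F' \neq F$ whose intersection $F' \cap F$ fails to be a facet of $F$ contributes a hyperplane of $H_F$ belonging to $\cL(P)$, and the vanishing hypothesis forces $f|_{H_F}$ to be divisible by the corresponding linear form on $H_F$. After dividing out all such factors, the resulting polynomial $g$ on $H_F$ has degree $d_F - n$ and its remaining vanishing conditions match those required for the adjoint of the facet polytope $F \subseteq H_F$; the inductive hypothesis yields $g = c_F \adj_F$ for some $c_F \in \C$. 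A parallel factorization of $\adj_P|_{H_F}$ (an unconditional restriction/residue identity for adjoints) then gives $f|_{H_F} = c_F \adj_P|_{H_F}$. Fixing one facet $F_0$ and setting $c := c_{F_0}$, comparing the two expressions on $H_F \cap H_{F_0}$ forces $c_F = c$ for every $F$ adjacent to $F_0$, and this propagates along the facet adjacency graph of $P$. Then $f - c\adj_P$ vanishes on every hyperplane of $\cH_P$, hence is divisible by $\prod_{v \in V(P^\circ)} l_v$, a polynomial of degree $d$; since $\deg(f - c\adj_P) \leq d-n-1 < d$, it must vanish, giving $f = c\adj_P$.

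The hardest step is the combinatorial identification in the existence part, i.e.\ pinning down $\max_\sigma |S_L \cap V(\sigma)|$ precisely in terms of whether $L$ is a face of $P$ and in terms of the configuration of $S_L$ inside the face lattice of $P^\circ$; the face/non-face dichotomy in $\ord_P$ must emerge naturally from this analysis and be uniform across triangulations. A closely related obstacle is the unconditional restriction formula $\adj_P|_{H_F} = (\text{product of extra linear forms}) \cdot \adj_F$ underpinning the induction: in the simple case this is essentially the content of \cite[Theorem~1]{KohnRanestad2019AdjCurves}, but extending it to non-simple facets requires a fresh understanding of how triangulations of $P^\circ$ behave under the restriction to a (non-simple) facet hyperplane.
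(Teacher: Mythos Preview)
Your overall architecture---existence via Warren's formula and uniqueness by restricting to facet hyperplanes and inducting on dimension---matches the paper's. The uniqueness half is essentially the paper's argument stripped of the positive-geometry language: the paper packages the ``divide out the extra linear forms, check the quotient satisfies the facet conditions, match the constants $c_F$ along the facet-adjacency graph'' step as a statement about residues of the canonical form $\Omega_f(P)$, but the content is the same. Two points you glossed over are non-trivial and occupy most of the paper's uniqueness section: that $f|_{H_F}\neq 0$ for \emph{every} facet $F$ (not just one), and that the quotient $g$ really inherits the correct vanishing orders for the facet polytope---this last is the paper's ``crucial Lemma'' and requires a careful comparison of $\ord_P$ with $\ord_{P\cap H}$ on flats.

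The existence argument, however, has a genuine gap. Your inequality
\[
\mu_L(\adj_P)\;\geq\;|S_L|-\max_{\sigma\in\cT}|S_L\cap V(\sigma)|
\]
is correct for any fixed triangulation $\cT$, and the bound $\max_\sigma|S_L\cap V(\sigma)|\leq\codim L$ (giving $\mu_L\geq\nl_{M_P}(L)$) follows since the vertices of $\sigma$ are affinely independent and $S_L$ spans an affine $(\codim L-1)$-space. But for the non-face case you need the sharper bound $\max_\sigma|S_L\cap V(\sigma)|\leq\codim L-1$, and this \emph{fails} for a generic triangulation: a single $\cT$ cannot witness the ``$+1$'' for every non-face flat simultaneously (if it did, each summand of Warren's formula would already satisfy all the vanishing conditions and hence, by uniqueness, be a scalar multiple of $\adj_P$---which is false). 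The paper resolves this by letting $\cT$ depend on $L$: it picks a facet $H_x\in\cH_P$ containing the face $L\cap P$ but not $L$, and uses the pulling triangulation from the dual vertex $v_x$. This forces $S_L\cap V(\sigma)$ into a proper facet of the dual face $C_x$, yielding the missing codimension. Your sentence ``when $L$ is not a face \dots\ one additional vertex is available'' points in the wrong direction; what is needed is one \emph{fewer} vertex of $S_L$ in each simplex, and achieving that requires the tailored triangulation.
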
 
 Theorem \ref{thm:intro_main_thm} is more effective when computing the adjoint by interpolation, since it only requires checking the correct vanishing of $\adj_P(X)$ at the zero dimensional spaces in $\cL(P)$. In contrast Theorem \ref{thm:intro_main_allcomponents} provides more information about the structure of the adjoint hypersurface $A_P$ and its singularities. More formally we have the following immediate corollary to Theorem \ref{thm:intro_main_allcomponents}. \begin{cor}
     Let $P\subseteq \pp^n$ be a full-dimensional polytope. The singular locus of the adjoint hypersurface contains all linear spaces $L\in \cL(P)$ with $\ord_P(L)\geq 2$. \begin{align*}
         \bigcup \{L\in \cL_P\mid \ord_P(L)\geq 2\}\subseteq \sing(A_P).
     \end{align*} 
 \end{cor}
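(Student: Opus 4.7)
The plan is to deduce the inclusion directly from Theorem \ref{thm:intro_main_allcomponents}. That theorem guarantees $\mu_L(\adj_P) \geq \ord_P(L)$ for every $L \in \cL(P)$, and the definition of $\mu_L$ for positive-dimensional $L$ (Definition \ref{def:order_general}) must propagate vanishing pointwise in the sense that $\mu_L(f) \geq k$ implies $\mu_x(f) \geq k$ at each $x \in L$. Thus, for any $L \in \cL(P)$ with $\ord_P(L) \geq 2$, one obtains $\mu_x(\adj_P) \geq 2$ at every point $x \in L$.

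Having established vanishing to order at least two, the remaining step is a direct application of the Jacobian criterion. By the definition of $\mu_x$ given in the excerpt, $\mu_x(\adj_P) \geq 2$ means that $\adj_P$ vanishes at $x$ and that every first-order partial derivative of $\adj_P$ vanishes at $x$ as well. This is precisely the condition for $x$ to be a singular point of the hypersurface $A_P$. Hence $L \subseteq \sing(A_P)$, and taking the union over all such $L$ yields the desired inclusion.

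No serious obstacle is anticipated: the corollary is in essence a translation of ``vanishing of order at least two at $x$'' into ``$x$ is a singular point of $A_P$''. The only subtlety lies in ensuring compatibility between the pointwise definition of $\mu_x$ and the extension $\mu_L$ for higher-dimensional $L$, which is already built into Definition \ref{def:order_general}. Once this compatibility is invoked, the proof collapses to the one-line observation above, which is why the authors describe the statement as an immediate corollary of Theorem \ref{thm:intro_main_allcomponents}.
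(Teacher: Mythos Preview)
Your argument is correct and matches the paper's reasoning: the corollary is stated there as an immediate consequence of Theorem~\ref{thm:intro_main_allcomponents} with no separate proof, and you have simply unpacked that immediacy (vanishing order $\geq 2$ along $L$ forces all first partials to vanish at every $x\in L$, hence $L\subseteq\sing(A_P)$). One small correction: the compatibility $\mu_L(f)\geq k \Rightarrow \mu_x(f)\geq k$ for $x\in L$ comes from the definition of $\mu_V$ and Remark~\ref{rem:vanishing_order}, not from Definition~\ref{def:order_general}, which defines $\ord_P(L)$ rather than $\mu_L$.
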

 The singularities of $A_P$ that can be predicted by Theorem \ref{thm:intro_main_allcomponents} are in general still not exhaustive, see Section \ref{sec: exmpls} for examples. In general describing the singularities of $A_P$ is a challenging open problem, see e.g. \cite{Telen2025Toric} for some results related to singularities of $A_P$.

The article is structured as follows. In \Cref{sec: def} we introduce the necessary notions for the discussion in later sections and state our main results. In \Cref{sec: main_thm_Julian} we prove that the adjoint of a polynomial is characterized by vanishing with the right order in the linear spaces in $\cH_P$. In \Cref{sec: main_cor_Clemens} we prove that the vanishing conditions in positive-dimensional linear spaces are redundant information for this characterization. We conclude with some examples and applications in \Cref{sec: exmpls}.

\section{Acknowledgements}
We thank Rainer Sinn and Mario Kummer for suggesting this problem and the use of canonical forms in the uniqueness part of Theorem \ref{thm:intro_main_allcomponents}. The first author was supported by the DFG grant 502861109. The second author was supported by the SPP 2458 “Combinatorial Synergies”,
funded by the DFG grant 539677510.

\section{Preliminaries and Notation} \label{sec: def}

Throughout this article $P \subseteq \R^n$ denotes a convex full-dimensional polytope in affine space. We identify $\R^n$ with the complement of a projective hyperplane $H_\infty\subseteq \pp^n$ in the complex projective space $\pp^n$ and hence get an embedding $\R^n \hookrightarrow \pp^n$. We do not distinguish between $P$ and its image under this embedding and refer to $P$ as a \emph{projective polytope}. Let $\cH_P$ be the finite collection of \emph{facet hyperplanes} of $P$, i.e. projective hyperplanes which arise by projectivizing a facet of $P$. Unless stated otherwise, when talking about faces of $P$ we include the empty face and the full polytope $P$.

A useful framework for working with hyperplane arrangements such as $\cH_P$ is provided by the theory of matroids. We use the language of matroid theory to keep our proofs concise. However, we emphasize that all matroids appearing in this article are realizable over the reals and therefore all arguments can be restated in terms of linear algebra. For background on matroid theory we refer to \cite{Oxley2011Matroids}.
\begin{defin}
 Denote by $M_P$ the \emph{matroid} corresponding to the hyperplane arrangement $\cH_P$. Its rank function is
 \begin{equation*}
     \rk_{M_P} \colon 2^{\cH_P} \to \N_0;
     S \mapsto \codim_{\pp^n}\left(\bigcap S\right).
 \end{equation*}
\end{defin}

We use the usual notation $\bigcap S:=\bigcap_{L\in S}L$. When $S=\emptyset$ then the intersection above is to be understood as the whole space $\pp^n$ with codimension 0. If the intersection is the empty set, then we use the convention $\codim_{\pp^n}(\emptyset)=n+1$. For a convex polytope $P\subseteq \pp^n$ of full dimension, the intersection of all facet hyperplanes is empty. Hence $M_P$ is a rank $n+1$ matroid on $d\coloneqq |\cH_P|$ elements with no loops and no parallel elements. 

\begin{defin}
 A full-dimensional projective polytope $P\subseteq \pp^n$ with $d$ facets is said to have a \emph{simple arrangement} if $M_P=U_{n+1,d}$ is the uniform matroid of rank $n+1$ on $d$ elements. By definition this means $\rk_{M_P}(S)=\min\{|S|,n+1\}$ for all $S \in 2^{\cH_P}$.
\end{defin}

Notice that having a simple arrangement is a stronger condition than being simple in the classical sense: A polytope is called \emph{simple} if each of its vertices belongs to precisely $n$ facets. Having a simple arrangement clearly implies being simple, but the reverse implication fails. The three-dimensional cube is a standard example of a simple polytope, the arrangement of which is not simple. To see this take $S$ to be the collection of four facet hyperplanes of the cube that come in two pairs of opposite facets. While these four facets don't intersect in a vertex, they do meet in one projective point at infinity and hence $\rk_{M_P}(S)=3\neq 4=\min\{|S|,3+1\}$.

For a point $x\in \pp^n$ let us consider the collection
\begin{equation*}
    \cF_x:=\{H\in \cH_P\mid x \in H\}
\end{equation*}
of all hyperplanes in $\cH_P$ which pass through $x$. This is a flat of the matroid $M_P$. Indeed, for any $H\notin \cF_x$ we have $x \notin H\cap\bigcap\cF_x$ but $x \in \bigcap\cF_x$ and hence
\begin{equation*}
    \rk_{M_P}(\cF_x\cup \{H\})=\codim_{\pp^n}\left( H\cap\bigcap\cF_x\right)> \codim_{\pp^n}\left(\bigcap\cF_x\right)=\rk_{M_P}(\cF_x).
\end{equation*}
In fact every flat of $M_P$ except the full set $\cH_P$ is obtained in this way. For any proper flat $\cF_x$, the intersection of the corresponding hyperplanes in $\cH_P$ gives a projective linear space. We write $\cL(P)$ for the collection of all such spaces: \begin{align*}
    \cL(P):=\left\{\bigcap\cF_x \middle| x \in \pp^n \right\}.
\end{align*}
Since flats are precisely those subsets of maximal size for a fixed rank, it makes sense to keep track of their nullity
\begin{equation*}
    \nl_{M_P}(\cF_x):=|\cF_x|-\rk_{M_P}(\cF_x),
\end{equation*}
which measures how for a set is from being independent in $M_P$. For $L_x:=\bigcap \cF_x \in \cL(P)$ we also write \begin{align*}
    \nl_{M_P}(L_x):=\nl_{M_P}(\cF_x)=|\cF_x|-\codim_{\pp^n}(L_x)
\end{align*} In this language flats are exactly those sets for which the nullity stays constant when adding any element.

Next we define the order of a linear subspace $L\in \cL(P)$ with respect to the polytope $P$. This order will describe a lower bound for the order of vanishing of the adjoint polynomial of $P$ along the linear space $L$.
\begin{defin}
\label{def:order_general}
    Let $L\in \cL(P)$. The \emph{order of $L$ with respect to $P$} is defined to be the number \begin{align*}
    \ord_P(L):=\begin{cases}
        \nl_{M_P}(L) &\text{ if } \dim(L\cap P)=\dim(L) \\
        \nl_{M_P}(L)+1 &\text{ else}.
    \end{cases}
 \end{align*}
\end{defin}

 Notice that if $P$ is simple, then any proper flat of $M_P$ is independent and hence has nullity 0. In this case the condition that $\dim(L\cap P)\neq \dim(L)$ is equivalent to $L$ not containing a non-empty face of $P$, which means $L$ belongs to the residual arrangement as defined by \cite{KohnRanestad2019AdjCurves}. Hence the order defined above recovers the residual arrangement in this case and Theorem \ref{thm:intro_main_allcomponents} specializes to \cite[Theorem 1]{KohnRanestad2019AdjCurves}.

Theorem \ref{thm:intro_main_allcomponents} states that the adjoint $\adj_P(X)$ vanishes along each subspace $L\in \cL(P)$ to at least the order $\ord_P(L)$. We define now what it means for a homogeneous polynomial to vanish along a subvariety of $\pp^n$ to a certain order. For our purposes it is enough to consider linear varieties.

Let $V\subseteq \pp^n$ be a variety and let $\mathcal{I}(V)$ be its vanishing ideal. This means that $\cI(V)$ is the homogeneous radical ideal generated by all homogeneous polynomials in $\C[X_0,\ldots,X_n]$ which vanish on $V$.

\begin{defin}
    Let $f\in \C[X_0,\ldots,X_n]$. The \emph{order of vanishing of $f$ along $V$} is defined as \begin{align*}
        \mu_V(f):=\max \{k \in \N_0 \mid f \in \mathcal{I}(V)^k\}.
    \end{align*} 
    If $V=\{x\}$ is a point, we write $\mu_x(f):=\mu_{\{x\}}(f)$.
\end{defin}

\begin{remark}
\label{rem:vanishing_order}
If $V$ consists of a single point $x \in \pp^n$, then the order of $f$ at $x$ is $k$ if and only if all partial derivatives of order up to $k-1$ of $f$ vanish at $x$ and there is at least one partial derivative of order $k$ which does not vanish at $x$. One can go from points to bigger subvarieties $V$ by noting that $\mu_V(f)=k$ is equivalent to $\mu_x(f)=k$ for almost all $x \in V$. Here "almost all" means that there might by a subvariety of positive codimension in $V$ where $f$ vanishes to an order even higher than $k$.
\end{remark}

Geometrically the vanishing order may be used to detect how singular the hypersurface defined by $f$ is along $V$. If $V=\{x\}$ is a point we have \begin{itemize}
    \item $\mu_x(f)=0$ if $f(x)\neq 0$, i.e. $x\notin V(f)$.
    \item $\mu_x(f)=1$ if $f(x)=0$ but not all first partial derivatives of $f$ vanish at $x$, i.e. $x$ is a smooth point of $V(f)$.
    \item $\mu_x(f)\geq 2$ if $f(x)=0$ and all first partial derivatives of $f$ also vanish at $x$, i.e. $x$ is a singular point of $V(f)$.
\end{itemize}

From the above definition it is easy to see that $\mu_V(\cdot)$ forms a valuation on (the fraction field of) $\C[X_0,\ldots,X_n]$. In other words we have for all $f,g \in \C[X_0,\ldots,X_n]$ \begin{itemize}
    \item $\mu_V(fg)=\mu_V(f)+\mu_V(g)$
    \item $\mu_V(f+g)\geq \min\{\mu_V(f),\mu_V(g)\}$
\end{itemize}

\section{Proof of Theorem \ref{thm:intro_main_allcomponents}} \label{sec: main_thm_Julian}
In the upcoming section we prove Theorem \ref{thm:intro_main_allcomponents}. Using Remark \ref{rem:vanishing_order} and the notation established in the previous section we reformulate Theorem \ref{thm:intro_main_allcomponents} in the following form.
\begin{thm} \label{mainTh}
 Let $P\subseteq \pp^n$ be a full-dimensional polytope with $d$ facets. There exists an up to scaling unique homogeneous polynomial $f\in \C[X_0,\ldots,X_n]$ of degree $d-n-1=\crk{M_P}$ such that $f$ vanishes at every point $x \in \pp^n$ to order
 \begin{equation*}
    \mu_x(f)\geq \ord_P(L_x).
 \end{equation*}
 This polynomial $f$ coincides (up to scaling) with the adjoint $\adj_P$.
\end{thm}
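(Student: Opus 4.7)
The plan is to prove existence and uniqueness separately.

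\textbf{Existence.} I would expand the defining sum
\[
\adj_P(X) = \sum_{\sigma \in \cT} \vol(\sigma) \prod_{v \in V(P^\circ) \setminus V(\sigma)} l_v(X)
\]
and check the vanishing summand-by-summand. Fix $x \in \pp^n$ and set $L = L_x$ with flat $\cF_x = \{H \in \cH_P : x \in H\}$. A factor $l_v$ lies in $\cI(L)$ precisely when $v$ corresponds to a hyperplane in $\cF_x$, so the summand indexed by $\sigma$ lies in $\cI(L)^{|\cF_x| - |V(\sigma) \cap \cF_x|}$. The key matroid-theoretic input is that the vertices of an $n$-simplex $\sigma \in \cT$ are affinely independent in $(\R^n)^\vee$, so the $n+1$ linear forms $\{l_v : v \in V(\sigma)\}$ are linearly independent in $\C[X_0,\ldots,X_n]_1$. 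Consequently $V(\sigma) \cap \cF_x$ is independent in $M_P$, so $|V(\sigma) \cap \cF_x| \leq \rk_{M_P}(\cF_x) = \codim L$, and each summand lies in $\cI(L)^{\nl_{M_P}(L)}$. This yields the bound $\mu_L(\adj_P) \geq \nl_{M_P}(L)$.

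The extra $+1$ needed when $L$ is not a face of $P$ requires a cancellation among those summands that attain vanishing order exactly $\nl_{M_P}(L)$, i.e.\ those $\sigma$ for which $V(\sigma) \cap \cF_x$ is a basis of $\cF_x$. Geometrically, this expresses the fact that the canonical form of $P$ has no residue along a non-face component $L$. I would attempt it either by a direct combinatorial argument — grouping simplices of $\cT$ that share a common face of dimension $\codim L - 1$ and exploiting signed-volume cancellation analogous to Stokes's theorem — or by invoking the interpretation of $\adj_P$ as the numerator of the canonical form of $P$ as a positive geometry.

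\textbf{Uniqueness.} Following the hint in the acknowledgements, I would use the theory of canonical forms. The rational top form $\Omega_P := \adj_P(X)\,dX / \prod_v l_v(X)$ is the canonical form of $P$ and is, up to scaling, the unique rational top form on $\pp^n$ with at worst simple poles along the facet hyperplanes, whose residues along each facet are the canonical forms of the facets of $P$, and with no additional poles. Given any $f$ of degree $d-n-1$ satisfying the hypotheses, I would show that $\Omega_f := f\,dX / \prod_v l_v$ also satisfies these characterizing properties. The vanishing conditions on $f$ translate exactly into the pole-order bounds on $\Omega_f$ along each $L \in \cL(P)$: the $\nl_{M_P}(L)$ part bounds the pole order of $\Omega_f$ along $L$ above by $\codim L$, while the $+1$ along non-face components forces $\Omega_f$ to have no residue there, matching the recursion on facets. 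The uniqueness of the canonical form then gives $f = c \cdot \adj_P$ for some $c \in \C$.

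\textbf{Main obstacle.} The hardest step is the extra $+1$ in the existence part, since this cannot be read off term-by-term and requires genuine cancellation; together with the task in the uniqueness step of verifying that the vanishing conditions are precisely equivalent to the characterizing residue structure of the canonical form. Both points seem to require an induction running in parallel with the recursive structure of polytopes, either on $\dim L$ or on $\codim$ of the flat $\cF_x$ in $M_P$, and the cleanest route likely proceeds via a single inductive argument that simultaneously handles the residues of $\Omega_P$ and the adjoints of the facets of $P$.
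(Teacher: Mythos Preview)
Your overall architecture matches the paper: existence via Warren's triangulation formula, uniqueness via canonical forms. The nullity bound $\mu_x(\adj_P)\geq \nl_{M_P}(L_x)$ is exactly as in the paper (their Lemma~3.2).

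For the extra $+1$, however, the paper does \emph{not} argue by cancellation in a fixed triangulation. Instead it constructs, for each fixed $x$, a \emph{point-dependent} triangulation $\cT_x$ of $P^\circ$ (a pulling triangulation with a carefully chosen initial vertex $v_x$ lying in a dual face $C_x$ determined by $L_x\cap P$) such that every summand individually lies in $I_x^{\nl_{M_P}(L_x)+1}$. The combinatorial point is that in such a triangulation each simplex meets the vertex set of $C_x$ only in a facet of $C_x$, which forces $|V(\sigma)\cap\cF_x|\leq \codim L_x-1$. Your cancellation idea is not wrong in principle (since $\adj_P$ is triangulation-independent, cancellation must occur in any fixed $\cT$), but you have not indicated how to organize it, and the ``Stokes-type grouping'' you sketch is vague; the paper's approach sidesteps this entirely.

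For uniqueness your plan is close to the paper's, but two substantive steps are missing. First, the induction on facets requires showing that if $f$ satisfies the hypotheses for $P$ then $f':=(f/\prod_{G\nmid H}l_G)|_H$ is a \emph{polynomial} of the correct degree satisfying the same hypotheses for $P'=P\cap H$; this is the content of the paper's key Lemma (involving a comparison of $\ord_P$ and $\ord_{P'}$ via the contraction $M_{P'}=M_P/H$), and it is not automatic. Second, the recursion only gives a scalar $c_H$ for each facet $H$ with $\Res_H(c_H\Omega_f)=\Omega(P\cap H)$; one must still check that all $c_H$ coincide, which the paper does using skew-symmetry of iterated residues and connectedness of the facet-incidence graph. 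Your phrase ``vanishing conditions translate exactly into pole-order bounds'' glosses over both of these.
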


The proof of this result is split into two parts: The existence of a polynomial $f$ with the correct vanishing and singularities and the uniqueness.
For the existence of a polynomial $f$ as in Theorem \ref{mainTh} we will prove that Warren's adjoint of the dual polytope $P^\vee$ has the correct vanishing behavior by constructing for every point $x$ a corresponding triangulation of $P^\vee$. We then use that Warren's adjoint can be computed from any such triangulation as a sum over simplices where we can show that each term individually has the correct vanishing behavior at the point $x$. The proof of uniqueness is more involved. We will use here the theory of positive geometries introduced by \cite{ArkanihamedBaiLam2017PosGeom}. We will show that any $f$ satisfying the above vanishing conditions must up to scaling be the unique numerator of the canonical form of $P$ seen as a positive geometry. This comes down to showing that the above restrictions on $f$ imply a special recursive behavior of a differential $n$-form on $\pp^n$ obtained from $f$. This recursive behavior by definition is unique to the canonical form of the positive geometry $P$.

\subsection{Existence} 
Let $P\subseteq \pp^n$ be a full-dimensional polytope. By a projective change of coordinates we may assume that $P$ is contained in the affine chart $\{X_0=1\}$ and contains the origin $(1:0:\cdots:0)$ as  an interior point. Let $P^\circ\subseteq (\R^n)^\vee \subseteq (\pp^n)^\vee$ be the polar dual of $P$ as defined in (\ref{eq:polar_dual}). Recall that by \cite[Theorem 4]{Warren1996Adjoints} for any triangulation $\cT$ of $P^\circ$ we have \begin{align}
\label{eq:triangulation_formula}
    \adj_P(X)=\sum_{\sigma\in \cT}\vol(\sigma)\prod_{v\in V(P^\circ)\setminus V(\sigma)}l_v(X).
\end{align}
Under the duality of $P$ and $P^\circ$ the vertices $V(P^\circ)$ correspond to the facets of $P$. Hence for each $v\in V(P^\circ)$ we have a facet hyperplane $H_v\in \cH_P$ of $P$. The linear form $l_v(X)=X_0+\sum_{i=1}^nv_iX_i$ is the defining polynomial of $H_v$. 

Fix a point $x \in \pp^n$ and let $I_x=\langle x_jX_i-x_iX_j | i,j \in \{0,\ldots,n\}\rangle$ be its vanishing ideal. We recall the notation $L_x=\bigcap \cF_x\in \cL(P)$ for the linear subspace corresponding to the flat $\cF_x$. Geometrically this is the smallest projective linear space that can be cut out by hyperplanes in $\cH_P$ and contains $x$.  Our goal is to show the following containment.

 \begin{prop}
 \label{prop:existence}
     In the above setup we have \begin{align*}
        \adj_P(X)\in I_x^{\ord_P(L_x)}.
     \end{align*}
 \end{prop}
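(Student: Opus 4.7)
My plan is to apply Warren's identity \eqref{eq:triangulation_formula} for a suitably chosen triangulation $\cT$ of $P^\circ$ and to verify that every individual summand $T_\sigma := \vol(\sigma)\prod_{v\in V(P^\circ)\setminus V(\sigma)}l_v(X)$ already lies in $I_x^{\ord_P(L_x)}$. Since $\mu_x$ is a valuation, summing over $\sigma$ then yields $\adj_P\in I_x^{\ord_P(L_x)}$.

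Set $\cV_x:=\{v\in V(P^\circ)\mid l_v(x)=0\}$, which corresponds to $\cF_x$ under the bijection between $V(P^\circ)$ and $\cH_P$. The linear forms $\{l_v\}_{v\in\cV_x}$ cut out $L_x$, so $\codim L_x=\dim\operatorname{aff}(\cV_x)+1$ and $\nl_{M_P}(L_x)=|\cV_x|-\dim\operatorname{aff}(\cV_x)-1$. Among the factors of $T_\sigma$, those lying in $I_x$ are precisely the $l_v$ with $v\in\cV_x$, hence $T_\sigma\in I_x^{|\cV_x|-|V(\sigma)\cap\cV_x|}$. Since the vertices of any simplex $\sigma$ are affinely independent we automatically have $|V(\sigma)\cap\cV_x|\leq \dim\operatorname{aff}(\cV_x)+1=\codim L_x$, which produces the baseline bound $T_\sigma\in I_x^{\nl_{M_P}(L_x)}$ for every triangulation of $P^\circ$.

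If $\dim(L_x\cap P)=\dim L_x$, then $\ord_P(L_x)=\nl_{M_P}(L_x)$ and the baseline bound already suffices, independent of the choice of triangulation. The delicate case is $\dim(L_x\cap P)<\dim L_x$, where $\ord_P(L_x)=\nl_{M_P}(L_x)+1$: here one extra power of $I_x$ must be extracted, and it suffices to exhibit a triangulation $\cT_x$ of $P^\circ$ (with vertices in $V(P^\circ)$) such that every $\sigma\in \cT_x$ satisfies the strict bound $|V(\sigma)\cap\cV_x|\leq \dim\operatorname{aff}(\cV_x)$, equivalently $|V(\sigma)\setminus\cV_x|\geq n+1-\dim\operatorname{aff}(\cV_x)$.

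The construction of $\cT_x$ is the technical crux. The key geometric input is that under the hypothesis $\dim(L_x\cap P)<\dim L_x$ the subpolytope $\conv(\cV_x)$ is a proper subset of the smallest face of $P^\circ$ containing $\cV_x$, namely the face of $P^\circ$ dual to $F_x:=L_x\cap P$; in particular there are vertices of $P^\circ$ in that face but outside $\cV_x$. I would build $\cT_x$ as an iterated pulling triangulation, at each recursive step pulling through a vertex in $V(P^\circ)\setminus \cV_x$ so that every top-dimensional simplex of $\cT_x$ collects the required $n+1-\dim\operatorname{aff}(\cV_x)$ non-$\cV_x$ vertices. The main obstacle is to verify that this recursion can be iterated deeply enough and that at each stage a non-$\cV_x$ vertex is available on the current face of $P^\circ$: a subface $F$ of $P^\circ$ having $V(F)\subseteq \cV_x$ would dualize to a subface $F^*$ of $P$ contained in $F_x$, which bounds how early the recursion could stall. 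Carrying this out requires an induction on the face lattice of $P^\circ$ in which the non-face property of $\conv(\cV_x)$ is propagated at each descent, guaranteeing the right choice of pulling vertex.
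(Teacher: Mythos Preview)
Your overall plan is the paper's: apply Warren's formula for a specially chosen triangulation of $P^\circ$ and bound $\mu_x(T_\sigma)$ term by term. The baseline estimate via affine independence of $V(\sigma)$ is exactly the paper's first lemma. The gap is in the residual case $\dim(L_x\cap P)<\dim L_x$. Write $C_x$ for the smallest face of $P^\circ$ containing $\cV_x$ (the face dual to $F_x$, as you note). First, your duality claim is reversed: a face $F$ of $P^\circ$ with $V(F)\subseteq\cV_x$ satisfies $V(F)\subseteq V(C_x)$, hence $F\subseteq C_x$ and therefore $F^\ast\supseteq F_x$, not $F^\ast\subseteq F_x$. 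Second, even the corrected inclusion only gives $\dim F^\ast\geq\dim F_x$, which does not bound $\dim F$ well enough for your greedy recursion. What is actually needed is that no face $F$ of $P^\circ$ with $V(F)\subseteq\cV_x$ has $\dim F=\dim\operatorname{aff}(\cV_x)$: if it did, then $\operatorname{aff}(F)=\operatorname{aff}(\cV_x)$, so $\cV_x\subseteq\operatorname{aff}(F)\cap P^\circ=F$, forcing $V(F)=\cV_x$ and thus $F=C_x$, contradicting $\cV_x\subsetneq V(C_x)$. With this bound your greedy pulling does go through, but the route via ``$F^\ast\subseteq F_x$'' does not lead there.

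The paper sidesteps the recursion-depth analysis altogether. It pulls through a \emph{single} vertex $v_x\in V(C_x)\setminus\cV_x$, dual to a facet hyperplane containing $F_x$ but not $L_x$. In the resulting pulling triangulation every simplex $\sigma$ contains $v_x$ and its remaining vertices lie in some facet of $P^\circ$ not containing $v_x$; intersecting with $C_x$ places $V(\sigma)\cap V(C_x)\setminus\{v_x\}$ inside a proper facet $G$ of $C_x$. Then $V(\sigma)\cap\cV_x\subseteq V(G)$, but $\cV_x\not\subseteq V(G)$ (otherwise $G^\ast\supsetneq F_x$ would be a face of $P$ contained in $L_x$, contradicting maximality of $F_x$). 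Hence $\operatorname{aff}(V(\sigma)\cap\cV_x)\subsetneq\operatorname{aff}(\cV_x)$ and $|V(\sigma)\cap\cV_x|\leq\codim L_x-1$ directly, without tracking how deep any recursion runs.
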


The definition of the order $\ord_P(L_x)$ suggests to decide between two cases: When the face $L_x\cap P$  of $P$ has dimension $\dim(L_x)$, the order is given by $\nl_{M_P}(L_x)$ and otherwise it is given by $\nl_{M_P}(L_x)+1$. Accordingly we split the proof of Proposition \ref{prop:existence} into two parts. First we show that without any assumptions on $x$ we can always get the vanishing order $\nl_{M_P}(L_x)$ of the adjoint (Lemma \ref{lem:existence_weak_vanishing}). The second and harder part of the proof then consists of showing that if $x$ is such that $L_x$ does not define a face of $P$ of the correct dimension, then we can gain one additional order of vanishing of the adjoint (Lemma \ref{lem:existence_strong_vanishing}).

\begin{lem}
\label{lem:existence_weak_vanishing}
    In the above situation we have 
    \begin{align*}
        \adj_P(X)\in I_x^{\nl_{M_P}(L_x)}.
     \end{align*}
\end{lem}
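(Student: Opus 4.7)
My plan is to apply Warren's triangulation formula \eqref{eq:triangulation_formula} directly and show that each summand already lies in $I_x^{\nl_{M_P}(L_x)}$. Fix any triangulation $\cT$ of $P^\circ$. Under the duality between $V(P^\circ)$ and $\cH_P$ sending a vertex $v$ to its facet hyperplane $H_v = \{l_v = 0\}$, the flat $\cF_x$ corresponds to the subset $V_x := \{v \in V(P^\circ) \mid l_v(x) = 0\}$, and each $l_v$ with $v \in V_x$ lies in the vanishing ideal $I_x$. It therefore suffices to show that for every simplex $\sigma \in \cT$ the product
\[\prod_{v \in V(P^\circ) \setminus V(\sigma)} l_v(X)\]
contains at least $\nl_{M_P}(L_x)$ factors indexed by elements of $V_x$.

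The key observation I would invoke is the standard translation between affine independence in $(\R^n)^\vee$ and linear independence in the matroid $M_P$: under the lift $v = (v_1,\dots,v_n) \mapsto (1,v_1,\dots,v_n) \in \C^{n+1}$ a subset $S \subseteq V(P^\circ)$ is affinely independent if and only if the corresponding linear forms $\{l_v \mid v \in S\}$ are linearly independent, i.e.\ if and only if $S$ is independent in $M_P$. Since any $\sigma \in \cT$ is an $n$-simplex with $n+1$ affinely independent vertices, the set $V(\sigma)$ is a basis of the rank-$(n+1)$ matroid $M_P$, and in particular every subset of $V(\sigma)$ is independent in $M_P$.

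Applying this to $V_x \cap V(\sigma) \subseteq \cF_x$, which is independent in $M_P$ and contained in a flat of rank $\rk_{M_P}(\cF_x) = \codim_{\pp^n}(L_x)$, I obtain
\[|V_x \cap V(\sigma)| \leq \rk_{M_P}(\cF_x),\]
so that $|V_x \setminus V(\sigma)| \geq |V_x| - \rk_{M_P}(\cF_x) = \nl_{M_P}(L_x)$ for every $\sigma \in \cT$. Each summand in Warren's formula is thus a product of linear forms at least $\nl_{M_P}(L_x)$ of which belong to $I_x$, and therefore $\adj_P(X)$, being a sum of such summands, itself lies in $I_x^{\nl_{M_P}(L_x)}$.

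I do not expect a serious technical obstacle in this lemma, since its content reduces to a clean counting statement once the matroid/geometry dictionary has been set up. The real difficulty is postponed to \Cref{lem:existence_strong_vanishing}: gaining the additional order of vanishing in the case where $L_x$ is not a face of $P$ of the expected dimension will presumably require choosing the triangulation $\cT$ adapted to $x$ so that the individual summands of \eqref{eq:triangulation_formula} — which a priori only sit in $I_x^{\nl_{M_P}(L_x)}$ — organize into pairwise cancellations modulo $I_x^{\nl_{M_P}(L_x)+1}$.
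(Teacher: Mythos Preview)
Your proof is correct and follows exactly the paper's approach: pick any triangulation, and show each summand of Warren's formula individually lies in $I_x^{\nl_{M_P}(L_x)}$ by bounding $|V_x\cap V(\sigma)|\le \rk_{M_P}(\cF_x)$ via independence of the vertices of a simplex. The only difference is cosmetic---you phrase the key step in matroid language (``$V(\sigma)$ is a basis of $M_P$''), while the paper phrases it geometrically (``the hyperplanes $H_v$ for $v\in V(\sigma)$ are independent'').

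One aside: your closing speculation about \Cref{lem:existence_strong_vanishing} is off the mark. The paper does \emph{not} arrange for pairwise cancellations modulo $I_x^{\nl_{M_P}(L_x)+1}$; rather, it constructs (via a pulling triangulation adapted to $x$) a triangulation $\cT_x$ for which every individual summand already lies in $I_x^{\nl_{M_P}(L_x)+1}$.
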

\begin{proof}
    Let $\cT$ be an arbitrary triangulation of $P^\circ$ so that we can write the adjoint $\adj_P(X)$ as in (\ref{eq:triangulation_formula}). Since $I_x^{\nl_{M_P}(L_x)}$ is an ideal, it is sufficient to show that every term of the sum in  (\ref{eq:triangulation_formula}) individually belongs to $I_x^{\nl_{M_P}(L_x)}$. We hence fix a simplex $\sigma \in \cT$ and consider the polynomial \begin{align*}
        t_\sigma:=\prod_{v\in V(P^\circ)\setminus V(\sigma)}l_v(X).
    \end{align*}
    We show that at least $\nl_{M_P}(L_x)$ factors of this product belong to $I_x$. Then clearly $t_\sigma\in I_x^{\nl_{M_P}(L_x)}$ as claimed.

    Let $v\in V(P^\circ)\setminus V(\sigma)$ and let $H_v\in \cH_P$ be the corresponding facet of $P$. By definition of $I_x$, the containment $l_v(X)\in I_x$ is equivalent to $x\in H_v$. Since the hyperplanes $H\in \cH_P$ that satisfy $x\in H$ are precisely those that satisfy $H\in \cF_x$, we need to show that the set \begin{align*}
        \{v \in V(P^\circ)\setminus V(\sigma)\mid H_v \in \cF_x\}
    \end{align*}
    has cardinality at least $\nl_{M_P}(L_x)=|\cF_x|-\codim_{\pp^n}(L_x)$. Equivalently we need to establish that the cardinality of the set \begin{align*}
       \cH_{\sigma,x}:= \{v \in V(\sigma)\mid H_v\in \cF_x\} 
    \end{align*}
    is bounded from above by $\codim_{\pp^n}(L_x)$. Since $\sigma$ is a simplex, the hyperplanes $H_v$ for $v\in V(\sigma)$ are independent, meaning that the cardinality of $\cH_{\sigma,x}$ agrees with the codimension of $\bigcap_{v\in\cH_{\sigma,x}}H_v$. The latter space contains $L_x$ by definition and hence \begin{align*}
        |\cH_{\sigma,x}|=\codim_{\pp^n}(\bigcap_{v\in\cH_{\sigma,x}}H_v)\leq \codim_{\pp^n}(L_x).
    \end{align*}
\end{proof}

In Lemma \ref{lem:existence_weak_vanishing} we had a lot of leeway: We showed that each term of any possible representation of the form (\ref{eq:triangulation_formula}) of the adjoint vanishes to order at least $\nl_{M_P}(L_x)$. For the more special case when $L_x$ does not define a face of $P$ we will still go term by term in a representation of the form (\ref{eq:triangulation_formula}). However, this time we need to choose the triangulation more carefully and it will depend on the chosen point $x$. In fact this dependence of the triangulation on $x$ is necessary since otherwise the uniqueness part of Theorem \ref{thm:intro_main_allcomponents} would imply that all summands of (\ref{eq:triangulation_formula}) are multiples of each other, which is not true in general.

\begin{lem}
\label{lem:existence_strong_vanishing}
    In the situation above assume additionally that $x$ is such that the face $L_x\cap P$ of $P$ is not of dimension $\dim(L_x)$ of $P$. Then \begin{align*}
        \adj_P(X)\in I_x^{\nl_{M_P}(L_x)+1}.
    \end{align*}
\end{lem}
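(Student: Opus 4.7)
The plan is to adapt the argument of Lemma \ref{lem:existence_weak_vanishing} by choosing the triangulation of $P^\circ$ carefully, in dependence on the point $x$. Writing $W := \{v \in V(P^\circ) \mid H_v \in \cF_x\}$, I aim to exhibit a triangulation $\cT_x$ of $P^\circ$ such that every top-dimensional simplex $\sigma \in \cT_x$ satisfies the strengthened inequality
\begin{equation*}
    |V(\sigma) \cap W| \,\leq\, \rk_{M_P}(\cF_x) - 1,
\end{equation*}
i.e.\ one better than the bound exploited in the preceding lemma. Once this is arranged, each summand $\vol(\sigma) \prod_{v \notin V(\sigma)} l_v(X)$ of (\ref{eq:triangulation_formula}) carries at least $|W| - (\rk_{M_P}(\cF_x) - 1) = \nl_{M_P}(L_x) + 1$ linear factors $l_v$ with $v \in W$, each of which lies in $I_x$. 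Summing over $\sigma \in \cT_x$ then yields the desired containment $\adj_P(X) \in I_x^{\nl_{M_P}(L_x)+1}$.

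The first step is to translate the hypothesis by polar duality: the condition $\dim(L_x \cap P) < \dim L_x$ is equivalent to $\conv(W)$ not being a face of $P^\circ$. The smallest face $F$ of $P^\circ$ containing $\conv(W)$ therefore admits a vertex $v_0 \in V(F) \setminus W$. A straightforward affine-linear check shows $W = V(P^\circ) \cap A$, where $A$ denotes the affine hull of $\conv(W)$; in particular $\dim A = \rk_{M_P}(\cF_x) - 1$ and $v_0 \notin A$. A matroid-theoretic observation that will be invoked repeatedly is that no subset $B \subseteq W$ of size $\rk_{M_P}(\cF_x)$ can have $\conv(B)$ as a face of $P^\circ$: otherwise its dual would be a face of $P$ equal to $L_x \cap P$ of dimension $\dim L_x$, contradicting the hypothesis of the lemma.

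Next I would construct $\cT_x$ as a pulling triangulation of $P^\circ$ based at $v_0$, carried out recursively so that at each level one pulls at a vertex outside $W$ whenever such a choice is available in the current face. Every top-dimensional simplex of the resulting $\cT_x$ contains $v_0$, and the goal is to maintain the inductive invariant that every simplex of $\cT_x$ meets $A$ in a subsimplex of dimension strictly less than $\dim A$. If this invariant holds, then the affine rank of $V(\sigma) \cap W \subseteq V(\sigma) \cap A$ is at most $\dim A - 1 = \rk_{M_P}(\cF_x) - 2$, so $|V(\sigma) \cap W| \leq \rk_{M_P}(\cF_x) - 1$ as desired.

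The main obstacle I anticipate is verifying the invariant across the recursion. Translating the ``no $\conv(B)$ is a face'' statement into a combinatorial guarantee --- namely that every facet visited during the pulling recursion still has a vertex outside $W$ available for pulling --- is the delicate step; in particular one has to rule out the pathology that a facet of $P^\circ$ not containing the current pull vertex might accidentally carry an entire affine basis of $A$ among its vertices in $W$. If plain pulling turns out to be insufficient, my fallback plan would be to instead use a regular triangulation induced by a generic height function on $V(P^\circ)$ chosen to be small on $V(P^\circ) \setminus W$ and large on $W$, and to argue by genericity together with the non-face property above that no $(\dim A)$-dimensional simplex with vertices entirely in $W$ arises as a face of the triangulation.
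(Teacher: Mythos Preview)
Your setup coincides with the paper's: the smallest face $F$ of $P^\circ$ containing $W$ is exactly the face $C_x$ dual to $\widetilde{C_x}=L_x\cap P$, and your pull vertex $v_0\in V(F)\setminus W$ is the paper's $v_x$. The place where you stall---worrying about subsequent pull choices and proposing a recursive ``always pull outside $W$'' strategy or a height-function fallback---is unnecessary. One pull at $v_0$ already does the job, and the observation you are missing is the following.

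\emph{The set $W$ is not contained in any proper face of $F=C_x$.} Indeed, a facet $F'$ of $C_x$ is dual to a face $G'$ of $P$ with $\widetilde{C_x}\subsetneq G'$; if $W\subseteq F'$ then $G'\subseteq\bigcap_{v\in W}H_v=L_x$, so $G'\subseteq L_x\cap P=\widetilde{C_x}$, a contradiction. Now after pulling at $v_0$, every maximal simplex $\sigma$ satisfies $V(\sigma)\setminus\{v_0\}\subseteq G$ for some facet $G$ of $P^\circ$ with $v_0\notin G$. Hence
\[
V(\sigma)\cap W \;\subseteq\; G\cap C_x,
\]
which is a proper face of $C_x$ (as $v_0\in C_x\setminus G$). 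Since $W$ is not contained in this face and every face of $P^\circ$ equals $P^\circ$ intersected with its own affine span, the missing vertex of $W$ also lies outside the affine span of $G\cap C_x$. Therefore $\operatorname{aff}(V(\sigma)\cap W)\subsetneq A$, and affine independence of $V(\sigma)$ gives $|V(\sigma)\cap W|\le\dim A=\rk_{M_P}(\cF_x)-1$, exactly the bound you wanted. No control of later pulls is needed, and the ``pathology'' you feared cannot occur: any facet $G$ of $P^\circ$ not containing $v_0$ meets $C_x$ in a proper face of $C_x$, so $G\cap W$ can never affinely span $A$.
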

To prove this lemma we construct an explicit triangulation $\cT_x$ of $P$, which depends on $x$ and which makes the correct vanishing of the adjoint around $x$ manifest. The main ingredient is the following result on triangulations of convex polytopes.
\begin{lem}
\label{lem:costructing_triangulations}
Let $Q\subseteq \R^n$ be a convex polytope, let $C\subseteq Q$ be a face of $Q$ and let $v\in C$ be a fixed vertex. There exists a triangulation $\cT$ of $Q$ such that for every simplex $\sigma \in \cT$ the set of vertices $V(\sigma)\cap V(C)\setminus \{v\}$ is contained in a facet of $C$.
\end{lem}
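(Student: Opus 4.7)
The plan is to take $\cT$ to be the \emph{pulling triangulation} of $Q$ based at $v$. Concretely, fix a total order on $V(Q)$ whose first element is $v$; recursively pull every facet $F$ of $Q$ with $v\notin F$ (using the induced order on $V(F)$), thereby obtaining a simplicial subdivision of the ``far boundary'' $\bigcup\{F : F \text{ facet of }Q,\ v\notin F\}$; and finally declare the top-dimensional simplices of $\cT$ to be the joins $\conv(\{v\}\cup V(\tau))$ as $\tau$ ranges over the top-dimensional simplices of this subdivision. A standard inductive argument on $\dim Q$ shows that the resulting cell complex is indeed a triangulation of $Q$ whose top-dimensional simplices all have $v$ as a vertex; I would cite this classical fact from polytope theory rather than reproduce its proof.

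With $\cT$ in hand, the vertex property is immediate from the cone description. Any $\sigma\in\cT$ is a face of some top-dimensional simplex $\conv(\{v\}\cup V(\tau))$, where $\tau$ lies in a facet $F$ of $Q$ with $v\notin F$. Hence $V(\sigma)\setminus\{v\}\subseteq V(\tau)\subseteq V(F)$. Since $F\cap C$ is a common face of $F$ and $C$, its vertex set satisfies $V(F)\cap V(C)=V(F\cap C)$, and intersecting the previous inclusion with $V(C)$ yields
\[
V(\sigma)\cap V(C)\setminus\{v\}\ \subseteq\ V(F\cap C).
\]

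To finish, it suffices to place $V(F\cap C)$ inside the vertex set of some facet of $C$. If $F\cap C=\emptyset$ there is nothing to check; otherwise $F\cap C$ is a nonempty face of $C$ which, because $v\notin F$, does not contain $v$. Writing $F\cap C$ as the intersection of all facets of $C$ containing it, at least one such facet $G$ must also avoid $v$, for $v$ does not lie in the intersection. Then $V(\sigma)\cap V(C)\setminus\{v\}\subseteq V(F\cap C)\subseteq V(G)$, which is the required conclusion.

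The only genuine obstacle I anticipate is invoking the existence of the pulling triangulation together with the fact that its top-dimensional simplices decompose as cones from $v$ over the pulling subdivisions of the facets of $Q$ not containing $v$. This is a classical result, and once it is available the combinatorial verification reduces to the two-line argument carried out above.
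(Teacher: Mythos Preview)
Your proposal is correct and essentially identical to the paper's proof: both take $\cT$ to be the pulling triangulation of $Q$ with $v$ as the first vertex, use that every (top-dimensional) simplex is a cone from $v$ over a simplex $\tau$ lying in some facet $F$ of $Q$ with $v\notin F$, and conclude by observing that $F\cap C$ is a proper face of $C$ (since $v\in C\setminus F$) and hence contained in a facet of $C$. The only cosmetic difference is that the paper spells out the recursive definition of the pulling triangulation explicitly, whereas you propose to cite it; also, your detour through ``$F\cap C$ is the intersection of the facets of $C$ containing it, one of which must avoid $v$'' is slightly more elaborate than necessary---it suffices to note that any proper face of $C$ lies in some facet of $C$.
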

\begin{proof} 
  An example of such a triangulation is the pulling triangulation with respect to any order of the vertices where the minimal vertex is $v$, see \cite[Section 3]{Manecke2020}. We repeat the construction here for completeness.

  If $Q$ is a simplex, then the pulling triangulation of $Q$ consists only of $Q$ itself.
  Otherwise fix a strict order $\prec$ on the vertices of $Q$ such that $v$ is minimal. \begin{enumerate}
      \item Let $F_1,\ldots,F_s$ be all the facets of $Q$ that do not contain the minimal vertex $\min_\prec(V(Q))$ with respect to $\prec$.
      \item For each $i=1,\ldots,s$ recursively compute the pulling triangulation $\cT_i$ of $F_i$ with respect to the restricted order $\prec|_{F_i}$.
      \item Define the pulling triangulation of $Q$ as \begin{align*}
      \cT:=\{\conv(\sigma,\min_\prec(V(Q)))\mid \sigma \in \bigcup_{i=1}^s\cT_i\}.
  \end{align*}
  \end{enumerate}
Then $\cT$ is a triangulation of $Q$ such that every simplex contains the vertex $v$. For $\sigma'=\conv(\sigma, v) \in \cT$ we have \begin{align*}
      V(\sigma') \cap V(C)\setminus \{v\}= V(\sigma)\cap V(C).
  \end{align*}
  Since $\sigma \in \bigcup_{i=1}^s\cT_i$, there exists a facet $F$ of $Q$ such that $\sigma\subseteq F$ and $v\notin F$. In particular $F$ does not contain $C$ and hence $V(\sigma)\cap V(C)\subseteq F\cap C$ is contained in a facet of $C$.
\end{proof}
\begin{ex}
\label{ex:pulling}
    Let $Q$ be the cube with vertices $e_S=\sum_{i\in S}e_i$ where $S$ ranges over all subsets of $\{1,2,3\}$. Let $v=(1,1,1)$ and let $C=Q\cap \{x_3=1\}$ be the top face of $Q$. For any vertex order starting with \begin{align*}
        v=(1,1,1)\prec (1,1,0) \prec (1,0,0) \prec (0,1,1) \prec \cdots
    \end{align*}
we obtain the triangulation displayed in Figure \ref{fig:pulling_triangulation}.
\begin{figure}
\hspace{0cm}
\begin{tikzpicture}[thick,scale=2]
\coordinate (A1) at (0,0);
\coordinate (A2) at (1,0);
\coordinate (A3) at (0,1);
\coordinate (A4) at (1,1);
\coordinate (B1) at (0.7,0.5);
\coordinate (B2) at (1.7,0.5);
\coordinate (B3) at (0.7,1.5);
\coordinate (B4) at (1.7,1.5);

\node[label=right:{v},circle,fill,inner sep=1pt] at (A4) {};
\node[label=above:{\color{blue} C }] at (A3) {};
\fill[blue,opacity=0.4] (A3) -- (A4) -- (B4) -- (B3) --cycle;
\begin{scope}[dashed,,opacity=0.6]
\draw (A1) -- (B1) -- (B2);
\draw (B1) -- (B3);
\end{scope}
\draw (A3) -- (A4) -- (B4) -- (B3) --cycle;
\draw (A3) -- (A1) -- (A2) -- (B2) -- (B4);
\draw (A2) -- (A4);
\end{tikzpicture}
\hspace{0.25cm}
\begin{tikzpicture}[thick,scale=2]
\coordinate (A1) at (0,0);
\coordinate (A2) at (1,0);
\coordinate (A3) at (0,1);
\coordinate (B1) at (0.7,0.5);
\coordinate (B2) at (1.7,0.5);
\coordinate (B3) at (0.7,1.5);
\coordinate (B4) at (1.7,1.5);

\draw (A1) -- (B1) -- (B2);
\draw (B1) -- (B3);

\draw (A1) -- (A3) -- (B3) -- (B4) -- (B2) -- (A2) --cycle;
\draw[color=red] (A2) -- (B1);
\draw[color=red] (A1) -- (B3);
\draw[color=red] (B1) -- (B4);
\end{tikzpicture}
\hspace{0.25cm}
\begin{tikzpicture}[thick,scale=2]
\begin{scope}[shift={(0.25,0)}]
\coordinate (A1) at (0,0);
\coordinate (A2) at (1,0);
\coordinate (A3) at (0,1);
\coordinate (A4) at (1,1);
\coordinate (B1) at (0.7,0.5);
\coordinate (B2) at (1.7,0.5);
\coordinate (B3) at (0.7,1.5);
\coordinate (B4) at (1.7,1.5);
\coordinate (C1) at (0.7,13/12);
\coordinate (C2) at (0.7,11/12);
    \fill[blue, opacity=0.4] (B4) -- (A4) -- (B3) -- cycle;
    \draw[color=red] (A4) -- (B3);
    \draw[color=red] (B1) -- (A4);
    \draw[color=red] (B1) -- (B4);
    \draw (A4) -- (B4) -- (B3);
    \draw (B3) -- (C1);
    \draw[dashed,opacity=0.6] (C1)--(C2);
    \draw (C2) -- (B1);
    \node[circle,fill,inner sep=1pt] at (A4) {};
\end{scope}

\begin{scope}[shift={(-0.25,0)}]
\coordinate (A1) at (0,0);
\coordinate (A2) at (1,0);
\coordinate (A3) at (0,1);
\coordinate (A4) at (1,1);
\coordinate (B1) at (0.7,0.5);
\coordinate (B2) at (1.7,0.5);
\coordinate (B3) at (0.7,1.5);
\coordinate (B4) at (1.7,1.5);
\fill[blue, opacity=0.4] (A3) -- (A4) -- (B3) -- cycle;
    \draw[color=red] (B3) -- (A4) --(A1);
    \draw (A1) -- (A3) -- (A4);
    \draw (A3) -- (B3);
    
    \draw[color=red,dashed,opacity=0.6] (A1) -- (B3);
    \node[circle,fill,inner sep=1pt] at (A4) {};
\end{scope}

\begin{scope}[shift={(0,0)}]
\coordinate (A1) at (0,0);
\coordinate (A2) at (1,0);
\coordinate (A3) at (0,1);
\coordinate (A4) at (1,1);
\coordinate (B1) at (0.7,0.5);
\coordinate (B2) at (1.7,0.5);
\coordinate (B3) at (0.7,1.5);
\coordinate (B4) at (1.7,1.5);
\coordinate (C1) at (7/32,15/32);
\coordinate (C2) at (189/320,81/64);
    \draw[color=red] (B3) -- (A4) --(A1);
    \draw[color=red] (A1) -- (C1);
    \draw[color=red] (C2) -- (B3);
    \draw[color=red,dashed,opacity=0.6] (C1) -- (C2);
    \draw[color=red] (B1) -- (A4);
    \draw (A1) -- (B1);
    \draw[dashed,opacity=0.6] (B1) -- (B3);
    \node[circle,fill,inner sep=1pt] at (A4) {};
\end{scope}
\begin{scope}[shift={(0.25,-0.25)}]
\coordinate (A1) at (0,0);
\coordinate (A2) at (1,0);
\coordinate (A3) at (0,1);
\coordinate (A4) at (1,1);
\coordinate (B1) at (0.7,0.5);
\coordinate (B2) at (1.7,0.5);
\coordinate (B3) at (0.7,1.5);
\coordinate (B4) at (1.7,1.5);
\coordinate (C1) at (11/10,0.5);
\coordinate (C2) at (41/50,7/10);

    \draw[color=red] (A4) -- (B2);
    \draw[color=red] (C2) -- (A4);
    \draw[color=red,dashed,opacity=0.6] (C2) -- (B1);
    \draw[color=red,dashed,opacity=0.6] (B1) -- (B4);
    \draw (A4) -- (B4) -- (B2);
    \draw (B2) -- (C1);
    \draw[dashed,opacity=0.6] (C1) -- (B1);
    \node[circle,fill,inner sep=1pt] at (A4) {};
\end{scope}

\begin{scope}[shift={(0,-0.5)}]
\coordinate (A1) at (0,0);
\coordinate (A2) at (1,0);
\coordinate (A3) at (0,1);
\coordinate (A4) at (1,1);
\coordinate (B1) at (0.7,0.5);
\coordinate (B2) at (1.7,0.5);
\coordinate (B3) at (0.7,1.5);
\coordinate (B4) at (1.7,1.5);
\coordinate (C1) at (3/4,7/12);
\coordinate (C2) at (3/4,5/12);

    \draw[color=red] (A4) -- (B2);
    \draw[color=red] (A2) -- (C2);
    \draw[color=red,dashed,opacity=0.6] (B1) -- (C2);
    \draw[color= red,dashed,opacity=0.6] (B1) -- (C1);
    \draw[color=red] (C1) -- (A4);
    \draw (A4) -- (A2) -- (B2);
    \draw[dashed,opacity=0.6] (B2) -- (B1);
\node[circle,fill,inner sep=1pt] at (A4) {};
\end{scope}

\begin{scope}[shift={(-0.25,-0.5)}]
\coordinate (A1) at (0,0);
\coordinate (A2) at (1,0);
\coordinate (A3) at (0,1);
\coordinate (A4) at (1,1);
\coordinate (B1) at (0.7,0.5);
\coordinate (B2) at (1.7,0.5);
\coordinate (B3) at (0.7,1.5);
\coordinate (B4) at (1.7,1.5);
    \draw[color=red] (A4) -- (A1);
    \draw[color=red,dashed,opacity=0.6] (A2) -- (B1) -- (A4);
    \draw (A4) -- (A2) -- (A1);
    \draw[dashed,opacity=0.6] (A1) -- (B1);
    \node[circle,fill,inner sep=1pt] at (A4) {};
\end{scope}

\end{tikzpicture}
\caption{Left: A cube $Q$ with distinguished face $C$ (blue) and vertex $v\in C$ from Example \ref{ex:pulling}. Middle: The recursively computed pulling triangulation for the facets not containing $v$. Right: The pulling triangulation of the cube, all simplices contain the vertex $v$.}
\label{fig:pulling_triangulation}
\end{figure}
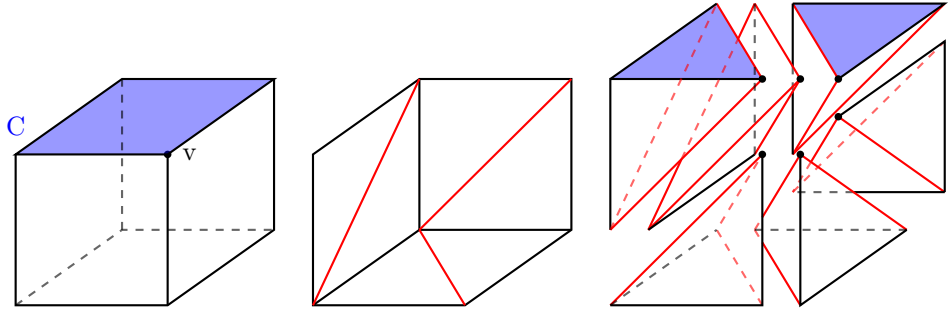
\end{ex}

We turn back to the setting of Lemma \ref{lem:existence_strong_vanishing}. To obtain a suitable triangulation of $P^\circ$ we will apply Lemma \ref{lem:costructing_triangulations} to a special face $C_x$ and a special vertex $v_x\in C_x$ of $P^\circ$.

\begin{proof}[Proof of Lemma \ref{lem:existence_strong_vanishing}]
Consider the (possibly empty) face $\widetilde{C_x}:=L_x\cap P$ of $P$. Since $\widetilde{C_x}$ is a face of $P$, we can write $\widetilde{C_x}$ as the intersection of some hyperplanes in $\cH_P$. By our assumption $\dim(\widetilde{C_x})<\dim(L_x)$ this implies the existence of a facet hyperplane $H_x\in \cH_P$ that contains $\widetilde{C_x}$ but does not contain $L_x$. By duality $\widetilde{C_x}$ corresponds to a face $C_x \subseteq P^\circ$ and the facet $H_x$ containing $\widetilde{C_x}$ corresponds to a vertex $v_x\in V(P^\circ)$ which is contained in $C_x$. Using Lemma \ref{lem:costructing_triangulations} we construct a triangulation $\cT$ of $P^\circ$ such that for every simplex $\sigma\in \cT$, the set $V(\sigma)\cap V(C_x)\setminus \{v_x\}$ is contained in a facet of $C_x$. Using the triangulation $\cT$, we show that every term of (\ref{eq:triangulation_formula}) individually belongs to the ideal $I_x^{\nl_{M_P}(L_x)+1}$. 

Fix $\sigma \in \cT$ and consider the polynomial \begin{align*}
        t_\sigma:=\prod_{v\in V(P^\circ)\setminus V(\sigma)}l_v(X).
    \end{align*}
    We show that at least $\nl_{M_P}(L_x)+1$ factors of this product belong to $I_x$. Then clearly $t_\sigma\in I_x^{\nl_{M_P}(L_x)+1}$ as claimed.

Since a factor $l_v(X), v \in V(P^\circ)$ belongs to $I_x$ precisely when the corresponding facet $H_v\in \cH_P$ belongs to $\cF_x$, we need to show that the set \begin{align*}
        \{v \in V(P^\circ)\setminus V(\sigma)\mid H_v \in \cF_x\}
    \end{align*}
    has cardinality at least $\nl_{M_P}(L_x)+1=|\cF_x|-\codim_{\pp^n}(L_x)+1$. Equivalently we need to bound the cardinality of the set \begin{align*}
       \cH_{\sigma,x}:= \{v \in V(\sigma)| H_v\in \cF_x\} 
    \end{align*}
    from above by $\codim_{\pp^n}(L_x)-1$. Since $H_{v_x}\notin \cF_x$, by definition of the triangulation $\cT$ the vertices in \begin{align*}
        \cH_{\sigma,x}\subseteq V(\sigma)\cap V(C_x)\setminus \{v_x\}
    \end{align*}
    belong to a facet $F$ of $C_x$. On the other hand since $\widetilde{C_x}$ is the largest face of $P$ contained in $L_x$, the set of vertices $\cF_x^\vee:=\{v\in V(P^\circ)|H_v\in \cF_x\}$ is not contained in any facet of $C_x$. This means that the inclusion
    \begin{align*}
    \Span(\cH_{\sigma,x})\subsetneq \Span(\cF_x^\vee) = L_x^\vee
    \end{align*}
    has to be strict since the left hand side is contained in the span of $F$ but the right hand side is not. Now since the vertices in $\cH_{\sigma,x}$ are contained in $V(\sigma)$, they are affinely independent and so we conclude \begin{align*}
        |\cH_{\sigma,x}|= \dim \Span(\cH_{\sigma,x})+1 < \dim L_x^\vee +1 = \codim_{\pp^n}(L_x).
    \end{align*}
    Hence $ |\cH_{\sigma,x}|\leq\codim_{\pp^n}(L_x)-1$ as claimed.
    
\end{proof}
\begin{proof}[Proof of Proposition \ref{prop:existence}]
    Pick $x\in \pp^n$. If $L_x\cap P$ is not a face of dimension $\dim(L_x)$ in $P$ then $\ord_P(L_x)=\nl_{M_P}(L_x)+1$ and the claim follows from Lemma \ref{lem:existence_strong_vanishing}. Otherwise $\ord_P(L_x)=\nl_{M_P}(L_x)$ and the claim follows from Lemma \ref{lem:existence_weak_vanishing}.
\end{proof}

\subsection{Uniqueness}

For the proof of the uniqueness part we will make use of the theory of positive geometries, which was developed by Arkani-Hamed, Bai and Lam in \cite{ArkanihamedBaiLam2017PosGeom}. We only recall the background needed for our argument. We also invite the reader unfamiliar with this beautiful topic to take a look at the introductory articles \cite{Lam2022PosGeom},\cite{Ranestad2025Whatis}.
\begin{defin}
\label{defi_positive_geom}
    Let $X$ be an irreducible complex projective variety of complex dimension $n$ and let $X_{\geq 0}\subseteq X(\R)$ be a semialgebraic subset of the real points of $X$ of real dimension $n$ together with an orientation. The pair $(X,X_{\geq 0})$ is called a \emph{positive geometry} if there exists a unique differential $n$-form $\Omega(X_{\geq 0})$ on $X$, called canonical form, satisfying the following recursive property. \begin{itemize}
        \item If $n=0$ then $X=X_{\geq 0}$ has to be a point and $\Omega(X_{\geq 0})=\pm 1$ where the sign depends on the orientation of $X_{\geq 0}$.
        \item For $n>0$ we require $\Omega(X_{\geq 0})$ to have at most simple poles along the boundary components of $X_{\geq 0}$ and no singularity elsewhere. Furthermore we require each boundary component of $X_{\geq 0}$ to be a positive geometry with canonical form given by the residue of $\Omega(X_{\geq 0})$ along the boundary component (see Definition \ref{def:residue} below).
    \end{itemize}
\end{defin}

The easiest class of examples of positive geometries are polytopes. For any full dimensional projective polytope $P$ as above, the pair $(\pp^n,P)$ is a positive geometry. In this case the canonical form can be given explicitly for all choices of $P$ by referring to the adjoint. In fact by \cite{Gaetz2020Adjoints} the canonical form of $(\pp^n,P)$ is (up to sign) given by 
 \begin{align}
 \label{eq:caninical_poly}
    \Omega(P):=\frac{\adj_P}{\prod_{H\in \cH_P}l_H} \omega_{\pp^n}
\end{align}
where $l_H=l_{v_H}=X_0+\sum_{i=1}^nv_iX_i$ is the linear form corresponding to the vertex $v_H\in V(P^\circ)$ of $P^\circ$ corresponding to $H$ under duality and $\omega_{\pp^n}$ is defined as \begin{align*}
    \omega_{\pp^n}:=\sum_{i=0}^n(-1)^iX_i\dd X_0\wedge\cdots\wedge \widehat{\dd X_i} \wedge \cdots \wedge \dd X_n.
\end{align*}
\begin{remark}
    \begin{enumerate}
        \item We usually omit the orientation when talking about polytopes as positive geometries. Changing the orientation simply multiplies $\Omega(P)$ by $(-1)$.  It is important to choose the orientation along the facets of $P$ in a compatible way when taking residues. To get really equality in (\ref{eq:caninical_poly}), not just up to sign, one needs to pick the orientation that makes $\Omega(P)$ positive on the interior of $P$.
        \item Formally $\omega_{\pp^n}$ can be seen as an $n$-form on $\C^{n+1}$, but it does not descend to a well-defined differential form on $\pp^n$ since it is not $0$-homogeneous. However, the volume forms (i.e. differential $n$-forms) on $\pp^n$ are precisely the forms $g(X)\omega_{\pp^n}$ for a homogeneous rational function $g(X)$ of degree $-n-1$, see e.g. \cite[Appendix C]{ArkanihamedBaiLam2017PosGeom}. In particular $\Omega(P)$ is a well-defined volume form on $\pp^n$. When restricting to the affine chart $X_0=1$ we get the standard volume form on $\C^n$. \begin{align*}
            \omega_{\pp^n}|_{X_0=1}=\dd X_1\wedge \cdots \wedge \dd X_n.
        \end{align*}
    \end{enumerate}
\end{remark}
\begin{defin}[Linear residues]
    \label{def:residue}
    Let $g(X)\in \C(X_0,\ldots,X_n)$ be a homogeneous rational function of degree $-n-1$ and consider the differential form $g(X)\omega_{\pp^n}$. Let $H\subseteq \pp^n$ be a hyperplane cut out by an equation $l_H=\sum_{i=0}^na_iX_i=0$ where $a_n=1$. We identify $H\cong \pp^{n-1}$ via $(X_0:\cdots:X_n)\mapsto (X_0:\cdots:X_{n-1})$. Suppose $g(X)\omega_{\pp^n}$ has a simple pole at $H$, i.e. for suitable rational functions $\widetilde{p}(X), q(X)\in \C(X_0,\ldots,X_n)$ of degree $-n$ we can write \begin{align}
    \label{eq:residue_form}
        g(X)\omega_{\pp^n}&= \left(\widetilde{p}(X)\sum_{i=0}^{n-1}(-1)^iX_i\dd X_0\wedge \cdots \wedge \widehat{\dd X_i}\wedge \cdots \wedge \dd X_{n-1}\right)\wedge \frac{\dd l_H}{l_H}\\& \nonumber+ q(X)\dd X_0\wedge \cdots \wedge \dd X_{n-1}
    \end{align}
    where $q(X),\widetilde{p}(X)$ have no poles around $H$.
    Let $p(X):=\widetilde{p}(X)|_H\in \C(X_0,\ldots,X_{n-1})$ be the rational function obtained by substituting $X_n=-\sum_{i=0}^{n-1}a_iX_i$ in $\widetilde{p}(X)$. We define the \emph{residue of $g(X)\omega_{\pp^n}$ along $H$} to be \begin{align*}
        \Res_H(g(X)\omega_{\pp^n}):=p(X)\omega_{\pp^{n-1}(X)}.
    \end{align*}
    We set the residue to zero if $g(X)\omega_{\pp^n}$ either has no pole around $H$ or one of order at least two.
\end{defin}
\begin{ex}
    Let $n=1$ and consider the line segment $P=\{(1:x)| a\leq x \leq b\}$ for $a\leq b\in \R$. This polytope is a positive geometry with canonical form \begin{align*}
        \Omega([a,b])=\frac{b-a}{(X_1-aX_0)(bX_0-X_1)}\omega_{\pp^1}.
    \end{align*}
    This differential form has a simple pole around the hyperplane $H=\{X_1-aX_0=0\}$ and so we can write \begin{align*}
        &\left(\frac{(b-a)X_0}{bX_0-X_1}\right)\wedge\left( \frac{\dd (X_1-aX_0)}{X_1-aX_0}\right)+ \frac{a-b}{bX_0-X_1}\dd X_0\\ &= \frac{b-a}{(X_1-aX_0)(bX_0-X_1)}(X_0\dd X_1-X_1\dd X_0)= \Omega([a,b]) 
    \end{align*}
    Here the wedge product in the first line is just multiplication since the term to the left of it is a $0$-form. Hence in (\ref{eq:residue_form}) we have $\widetilde{p}(X)=\frac{(b-a)X_0}{bX_0-X_1}$ and $q(X)=\frac{a-b}{bX_0-X_1}$. The residue around $H=\{X_1-aX_0=0\}$ is obtained by restricting $\widetilde{p}$ to $H$: \begin{align*}
        \Res_H(\Omega([a,b])= \frac{(b-a)X_0}{bX_0-aX_0}\omega_{\pp^0}= 1.
    \end{align*}
    Similarly the residue along $bX_0-X_1=0$ is $-1$. These two constants are the two canonical forms at the two vertices of the line segment. They come with distinct signs since any orientation of the line segment restricts to opposite orientations on its two endpoints.
\end{ex}
\begin{remark}
    Some readers might be more familiar with the dehomogenized version of the residue operator, which is obtained by working in affine local coordinates. We prefer the homogeneous version because even when the polytope $P$ is completely contained in an affine chart, its residual arrangement is not contained in the same chart if $P$ has parallel facets. There are two possible ways of dealing with this issue: 
    
    First we could change coordinates to a different affine chart which avoids $P$ and does not contain any of the linear spaces $L_x=\bigcap \cF_x$ entirely. This is always possible and reduces all arguments to the affine case. Consider for example the unit square with vertices $(0,0),(0,1),(1,0),(1,1)\in \R^2\subseteq \pp^2$. The two residual points of this square lie at infinity since they come from intersecting parallel facets. The projective hyperplane $x+y+z=0$ avoids the square and also avoids the two residual points, hence after the projective coordinate change $(x:y:z)\mapsto(x:y:x+y+z)=:(a:b:c)$, the square and its entire residual arrangement is contained in the affine chart $c=1$. When identifying this chart with $\R^2$ via the map $(a:b:c)\mapsto (\frac{a}{c},\frac{b}{c})$, the square gets mapped to the quadrilateral with vertices $(0,0),(0,\frac{1}{2}),(\frac{1}{2},0),(\frac{1}{3},\frac{1}{3})$. 

    The second possibility is to simply work in projective coordinates all along, not distinguishing the hyperplane at infinity at all. Then instead of working with local affine coordinates, we should consider the canonical forms and the residue operator in global homogeneous coordinates. For this article we prefer to stick to the second option since it requires no choice of a hyperplane at infinity.
\end{remark}

For the rest of this section we fix a full-dimensional convex polytope $P\subseteq \pp^n$ with $d$ facets as in Theorem \ref{thm:intro_main_allcomponents}.

Consider a polynomial $f\in \C[X_0,\ldots,X_n]_{d-n-1}$ and assume that it satisfies $\mu_L(f)\geq \ord_P(L)$ for all linear subspaces $L\in \cL(P)$. As noted in Remark \ref{rem:vanishing_order} this implies $\mu_x(f)\geq \ord_P(L_x)$ for all $x \in \pp^n$, where we recall that $L_x=\bigcap\cF_x$ is the intersection of all facet hyperplanes containing $x$. 
We define 
\begin{align*}
     \Omega_f(P):=\frac{f}{\prod_{H\in \cH_P}l_H}\omega_{\pp^n}.
 \end{align*}
 The proof of the uniqueness part of Theorem \ref{thm:intro_main_allcomponents} consist of showing that $\Omega_f(P)$ satisfies the same recursive property as the canonical form $\Omega(P)$ and then deducing that $f$ and $\adj_P$ must agree up to scaling.

 Let us set up some notation for passing to a facet of $P$. Fix one facet hyperplane $H\in \cH_P$ and let $l_H=\sum_{i=0}^na_iX_i$ be its defining linear form. By possibly reordering the variables we may assume $a_n\neq 0$ and hence we can rescale $l_H$ such that $a_n=1$. We identify $H\cong \pp^{n-1}$ via projection to the first $n$ homogeneous coordinates. For a rational function $g\in \C(X_0,\ldots,X_n)$ we write $g|_H\in \C(X_0,\ldots,X_{n-1})$ for the rational function on $\pp^{n-1}$ obtained by substituting $X_n=-\sum_{i=0}^{n-1}a_iX_i$.
 
 We write $P':=P\cap H$ for the polytope in $\pp^{n-1}$ obtained by mapping the facet corresponding to $H$ under the isomorphism $H\cong \pp^{n-1}$. This is again a full-dimensional convex polytope and we write $d'$ for its number of facets. For a facet $G\in \cH_P$ we write $G|H$ if $G\cap H \cap P$ is a codimension 2 face of $P$. This simply means that the corresponding vertices $v_H,v_G\in V(P^\circ)$ on the dual side are connected by an edge. Otherwise we write $G\nmid H$. By convention both of the expressions $H|H$ and $H\nmid H$ are false. The hyperplane arrangement of $P'$ is then given by \begin{align*}
     \cH_{P'}=\{H\cap G \mid G|H\}.
 \end{align*}
 Notice that in a convex polytope it is impossible for three facet hyperplanes to intersect in a linear space of codimension 2. Hence each facet hyperplane is uniquely determined by its intersection with $H$ and we can therefore consider $\cH_{P'}$ as a subset of $\cH_P$. For a point $x \in H=\pp^{n-1}$ we write $\cF_x'=\{G\in \cH_{P'}| x \in G\}$ for the corresponding flat of $M_{P'}$. As before we write $L_x=\bigcap \cF_x$ and we also write $L_x'=H\cap \bigcap \cF_x'$

\begin{lem}
\label{crucial_Lemma}
    In the above setting  assume further that $f$ does not vanish identically along $H$, i.e. $f|_H\neq 0$. Then $\Omega_f(P)$ has a simple pole along $H$, so we can write \begin{align*}
        \Res_H(\Omega_f(P))= \frac{f'}{\prod_{G|H}l_G|_H} \omega_{\pp^{n-1}}
    \end{align*}
    for some non-zero rational function $f'\in \C(X_0,\ldots,X_{n-1})$. Furthermore $f'$ is a polynomial of degree $\deg(f')=d'-(n-1)-1$ and for each point $x \in H$ we have $\mu_x(f')\geq \ord_{P'}(L_x')$.
\end{lem}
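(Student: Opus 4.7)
The plan is to compute the residue explicitly from Definition~\ref{def:residue}, with the main technical hurdle being to show that the extra linear factors appearing in the denominator of the residue actually divide $f|_H$. Writing $\dd X_n = \dd l_H - \sum_{i=0}^{n-1} a_i\,\dd X_i$ in order to separate the $\dd l_H/l_H$ piece of $\Omega_f(P)$, the fact that $l_H$ appears with multiplicity one in the denominator together with the assumption $f|_H \neq 0$ yields a simple pole, and a direct substitution gives
\[
\Res_H(\Omega_f(P)) \;=\; \frac{f|_H}{\prod_{G \neq H} l_G|_H}\,\omega_{\pp^{n-1}}.
\]
Splitting the denominator according to whether $G|H$ or $G \nmid H$, I would set $f' := f|_H / \prod_{G \nmid H,\, G \neq H} l_G|_H$ and then need to show that $f'$ is a polynomial of degree $d'-n$ with $\mu_x(f') \geq \ord_{P'}(L_x')$ for every $x \in H$.

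The hard part is showing $f'$ is polynomial. For each $G$ with $G \nmid H$ and $G \neq H$, the subspace $L := G \cap H$ is codimension two, lies in $\cL(P)$, and satisfies $\dim(L \cap P) < \dim L$, so $\ord_P(L) = \nl_{M_P}(L) + 1 = |\cF_L| - 1$, where $\cF_L := \{G' \in \cH_P \mid G' \supseteq L\}$. Every $G' \in \cF_L \setminus \{H\}$ satisfies $G' \cap H = L$ by codimension, so each restricted linear form $l_{G'}|_H$ is a scalar multiple of a single defining linear form $\lambda_L$ of $L$ in $H$. Grouping the factors by their associated $L$, the full denominator becomes (up to a nonzero scalar) $\prod_L \lambda_L^{|\cF_L|-1}$. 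The hypothesis $\mu_L(f) \geq |\cF_L| - 1$ gives $\lambda_L^{|\cF_L|-1} \mid f|_H$, and since distinct codimension-one $L \subseteq H$ produce non-proportional $\lambda_L$'s, the whole product divides $f|_H$. The degree is then $\deg f' = (d-n-1) - (d-1-d') = d'-n$.

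Finally, for the vanishing at $x \in H$ I would bound the stronger quantity $\mu_{L_x'}(f')$; since $x \in L_x'$ this dominates $\mu_x(f')$ and yields the claimed inequality. Note that $L_x'$ lies in $\cL(P)$ since it is cut out by $\{H\} \cup \{G \in \cF_x \mid G|H\}$. Setting $k' := |\{G \in \cF_{L_x'} \mid G \neq H,\, G \nmid H\}|$, a direct computation using $\codim_{\pp^n}(L_x') = \codim_H(L_x') + 1$ together with $L_x' \cap P = L_x' \cap P'$ (which ensures that the two cases of Definition~\ref{def:order_general} match up on either side) yields the identity
\[
\ord_P(L_x') - \ord_{P'}(L_x') \;=\; k'.
\]
The restriction $f|_H$ vanishes along $L_x'$ to order at least $\ord_P(L_x')$, while each factor $l_G|_H$ in the denominator with $L_x' \subseteq G$ contributes exactly one to $\mu_{L_x'}$, for a total of $k'$. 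Subtracting gives $\mu_{L_x'}(f') \geq \ord_P(L_x') - k' = \ord_{P'}(L_x')$, completing the plan.
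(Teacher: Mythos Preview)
Your proof is correct and follows essentially the same route as the paper: the residue computation, the divisibility argument for polynomiality (your grouping by $L=G\cap H$ collapses to the paper's factor-by-factor argument since convexity forces $|\cF_L|=2$), and the degree count are all the same. The one cosmetic difference is in the final vanishing bound: you argue directly with the valuation $\mu_{L_x'}$ and the clean identity $\ord_P(L_x')-\ord_{P'}(L_x')=k'$, whereas the paper passes to a Zariski-generic point $y\in L_x'$ (their $j$ equals your $k'$) and then closes up---both encode the same computation, with yours being slightly more streamlined.
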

\begin{proof}
    Since $f$ does not vanish identically along $H$ we have \begin{align*}
        \Omega_f(P)&=\left(\frac{f}{\prod_{G\in \cH_P\setminus \{H\}}l_G}\sum_{i=0}^{n-1} (-1)^iX_i\dd X_0\wedge \cdots \wedge \widehat{\dd X_i}\wedge \cdots \wedge \dd X_{n-1}\right)\wedge \frac{\dd l_H}{l_H} \\
        &+\frac{(-1)^nf}{\prod_{G\in\cH_P\setminus \{H\}}l_G}\dd X_0\wedge \cdots \wedge \dd X_{n-1}.
    \end{align*}
    It follows that the residue along $H$ is \begin{align*}
        \Res_H(\Omega_f(P))=\left(\frac{f}{\prod_{G\in \cH_P\setminus \{H\}}l_G}\middle)\right|_{H}\omega_{\pp^{n-1}}
    \end{align*}
    By fixing the denominator we see that up to scaling we must have \begin{align}
    \label{exprseeionforf'}
        f'= \left.\frac{f}{\prod_{G\nmid H}l_G}\right|_H
    \end{align}
    
    First we show that $f'$ actually is a polynomial, i.e. that $f|_H$ is divisible by $\prod_{G\nmid H}l_G|_H$. As already stated above, no three facets of a convex polytope intersect in a codimension 2 space and so for $G_1\neq G_2$ we have $G_1\cap H\neq G_2 \cap H$ and therefore the two linear forms $l_{G_1}|_H,l_{G_2}|_H$ don't have a common divisor. Hence it suffices to show that $f|_H$ is divisible by $l_G|_H$ for any facet $G\nmid H$. Fix such a facet $G\nmid H$. Within $G\cap H$ there is a Zariski dense open subset $U$ such that for $x\in U$ we have $\cF_x=\{G,H\}$. More explicitly $U$ avoids all of the spaces $H\cap G \cap G'$ for $G'\in \cH_P\setminus \{H,G\}$. Since such spaces are of codimension at least 3 in $\pp^n$, the set $U$ is in fact Zariski dense. Now it suffices to show that for any $x \in U$ we have $f(x)=0$, since then by taking the Zariski closure we get $f(x)=0$ for all $x \in G\cap H$ and hence by Hilbert's Nullstellensatz $l_G|H$ divides $f$. Pick $x\in U$, then \begin{align*}
        \mu_x(f)\geq \ord_P(L_x)=\ord_P(G\cap H)=1.
    \end{align*}
    The final identity is because $\nl_{M_P}(\cF_x)=0$ and $G\cap H$ does not intersect $P$ in a codimension 2 face since $G\nmid H$. But by definition $\mu_x(f)\geq 1$ means $f(x)=0$ and hence we conclude that indeed $f'$ is a polynomial.

    From (\ref{exprseeionforf'}) we also see that \begin{align*}
        \deg(f')&=\deg(f)-|\{G\in \cH_P| G\nmid H\}|\\&= d-n-1-(d-d'-1)=d'-(n-1)-1
    \end{align*}
as claimed.

    Next we need to get the bounds on the orders of vanishing for $f'$. We start by some general observations regarding the relationship of the polytope $P$ and the polytope $P'=P\cap H$. \begin{enumerate}[label=(\alph*)]
        \item As noted above we may view $\cH_{P\cap H}$ as a subset of $\cH_P$. Then the matroid $M_{P'}$ is a minor of the matroid $M_P$. More precisely $M_{P'}$ is obtained from $M_P$ by contracting the element $H$ and restricting to $\cH_{P'}$. In particular for $S\subseteq \cH_{P'}$ we get $\nl_{M_{P'}}(S)=\nl_{M_P}(S\cup\{H\})$.
        \item Fix a point $x \in H$. The flat $\cF_x'\subseteq \cH_{P\cap H}$ of all hyperplanes passing through $x$ is obtained by removing $H$ and all facet hyperplanes not incident with $H$ from $\cF_x$. In particular $L_x \subseteq L_x'$ in $\pp^n$.
    \end{enumerate}
        Let $G_1,\ldots,G_s$ be an enumeration of $\cF_x\setminus (\cF_x'\cup \{H\})$ such that there exists $0\leq j\leq s$ with \begin{align*}
            L_x'&\subseteq G_1,\ldots, G_j \\
             L_x'&\nsubseteq G_{j+1},\ldots, G_s 
        \end{align*}
        Notice that this implies \begin{align*}
            \rk_{M_{P'}}(\cF_x')\overset{(a)}{=}\rk_{M_P}(\cF_x'\cup\{H\})-1= \rk_{M_P}(\cF_x'\cup\{H,G_1,\ldots,G_j\})
        \end{align*}
        and therefore \begin{align}
        \label{eq:nullity_ineq}
            \nl_{M_P}(\cF_x'\cup\{H,G_1,\ldots,G_j\})= \nl_{M_P}(\cF_x'\cup\{H\})+j= \nl_{M_{P'}}(\cF_x')+j
        \end{align}
        Let us consider the linear subspace $L_x'$. Any hyperplane in $\cH_P$ other than those belonging to $\{H,G_1,\ldots,G_j\}\cup \cF_x'$ intersects $L_x'$ in a subspace of codimension at least one in $L_x'$. Let $U\subseteq L_x'$ denote the Zariski dense subset in $L_x'$ which avoids all hyperplanes in $\cH_P$ which do not appear in the list $\{H,G_1,\ldots,G_j\}$. Fix an arbitrary point $y\in U$. We must have $\cF_y=\cF_x'\cup\{H,G_1,\ldots,G_j\}$ and therefore $L_y=\bigcap \cF_y=L_x'$ by definition of $G_1,\ldots,G_j$. In particular by 
        (\ref{eq:nullity_ineq}) we obtain \begin{align}
               \label{eq:order_ineq_y}
               \ord_P(L_y)=\ord_{P'}(L_x')+j 
        \end{align}
 Since for $G\nmid H$ we have $\mu_y(l_G)=1$ if $G\in \{G_1,\ldots,G_j\}$ and $\mu_y(l_G)=0$ otherwise, we conclude \begin{align*}
     \mu_y(f')=\mu_y(f)-\sum_{G\nmid H}\mu_y(l_G)= \mu_y(f)-j\geq \ord_{P}(L_y)-j= \ord_{P'}(L_x').
 \end{align*}
  As $y\in U$ was arbitrary and $U$ is Zariski dense in $L_x'$ we conclude that $f'$ vanishes to at least the order $\ord_{P'}(L_x')$ at any point in $L_x'$. In particular at the point $x$ we see \begin{align*}
     \mu_x(f')\geq \ord_{P'}(L_x')
 \end{align*}
 which finishes the proof.
\end{proof}

Our next step is to get rid of the assumption that $f$ does not identically vanish along $H$ in Lemma \ref{crucial_Lemma}. To this end we will show that if $f$ satisfies the vanishing conditions of Theorem \ref{thm:intro_main_allcomponents} then $f$ can not vanish identically along any facet hyperplane of $P$. The idea here is that the fixed degree of $f$ prevents $f$ from "vanishing too much in too many places at once".
We establish this fact by first finding one facet along which $f$ does not vanish identically and then use Lemma \ref{crucial_Lemma} for this facet to reduce dimension.
\begin{lem}
    Assume $f\in \C[X_0,\ldots,X_n]_{d-n-1}$ is homogeneous of degree $d-n-1$ and $f$ vanishes at each point $x\in \pp^n$ to order at least $\ord_P(x)$. Then for any hyperplane $H\in \cH_P$ we have $f|_H\neq 0$, i.e. $f$ does not identically vanish along $H$.
\end{lem}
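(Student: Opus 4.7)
The plan is to induct on the dimension $n$ of $P$. The base case $n=1$ is immediate: then $d=2$ and $\deg(f) = d - n - 1 = 0$, so $f$ is a nonzero constant and trivially does not vanish identically on either endpoint of the segment.

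For the inductive step, I would assume the lemma for polytopes of dimension $n-1$ and let $f \in \C[X_0,\ldots,X_n]_{d-n-1}$ satisfy the hypotheses for $P\subseteq\pp^n$. The first move is a simple degree count: if every linear form $l_H$ divided $f$, then their product, which has degree $d$, would divide $f$, contradicting $\deg(f) = d-n-1 < d$. Hence the set of \emph{good} facets $\{H^*\in\cH_P : f|_{H^*}\neq 0\}$ is nonempty.

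Suppose toward contradiction that some \emph{bad} facet $H\in\cH_P$ satisfies $f|_H = 0$. The key combinatorial input I would use is that the facet-ridge adjacency graph of a convex polytope is connected: the boundary of $P$ is a topological sphere, polyhedrally subdivided by its facets, and two facets are joined by an edge precisely when $H\cap H'\cap P$ is a codimension-two face, i.e.\ when $H\mid H'$. Walking in this graph from a good facet to any bad facet, I must at some step cross an edge joining a good $H^*$ to a bad $H$ with $H\mid H^*$. Applying Lemma \ref{crucial_Lemma} to this $H^*$ then yields a nonzero polynomial $f' \in \C[X_0,\ldots,X_{n-1}]$ of degree $d'-n$ satisfying the hypotheses of Theorem \ref{thm:intro_main_allcomponents} for the facet polytope $P' := P\cap H^* \subseteq \pp^{n-1}$.

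The final step is to show that $f'$ vanishes identically on the facet $H\cap H^*$ of $P'$, contradicting the inductive hypothesis. Writing $f = l_H\cdot g$ and recalling from Lemma \ref{crucial_Lemma} that
\begin{equation*}
    f' = \left.\frac{f}{\prod_{G\nmid H^*} l_G}\right|_{H^*},
\end{equation*}
the point is that $l_H$ is \emph{not} among the factors being cancelled, precisely because $H\mid H^*$. Hence $l_H|_{H^*}$, which is the defining linear form of the facet $H\cap H^*$ of $P'$ inside $\pp^{n-1}$, survives as a factor of $f'$, so $f'|_{H\cap H^*} = 0$. The main obstacle I anticipate is the careful bookkeeping: one must partition $\cH_P = \{H^*\}\sqcup\{G : G\mid H^*\}\sqcup\{G : G\nmid H^*\}$ and verify that the surviving factors after taking the residue are exactly those $l_G|_{H^*}$ with $G\mid H^*$, i.e.\ the facet equations of $P'$. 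Once this is set up, the connectivity of the facet-ridge graph is what allows the single degree-count observation (one good facet) to be bootstrapped to the full statement (every facet is good).
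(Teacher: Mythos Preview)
Your proposal is correct and follows essentially the same approach as the paper: both argue by induction on $n$, use a degree count to find at least one good facet, invoke connectivity of the facet-ridge graph to locate an adjacent good/bad pair, apply Lemma~\ref{crucial_Lemma} at the good facet, and observe that the bad facet's linear form survives in $f'$ because incidence excludes it from the denominator. The only difference is notational (your $H^*,H$ are the paper's $H,G$).
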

\begin{proof}
    First notice that if $f$ vanishes identically along a hyperplane $H$ then $l_H$ divides $f$. Hence if $f$ vanished along all hyperplanes $H\in \cH_P$ then as the equations of pairwise distinct linear hyperplanes share no common factors, we would have $\deg(f)\geq d$. But clearly $\deg(f)=d-n-1 < d$, hence we find at least one hyperplane $H$ along which $f$ does not identically vanish. \par
    Assume for contradiction that there exists at least one hyperplane $G\in \cH_P$ along which $f$ is identically zero. Then we can choose $G,H\in \cH_P$ such that $f$ is not zero on all of $H$, $f$ is zero along all of $G$ and $G,H$ correspond to incident facets of $P$. This is always possible because the incidence graph of the facets of a polytope is connected (it is the 1-skeleton of $P^\circ$), so we can walk from any ``bad'' hyperplane $G$ to any ``good'' hyperplane $H$ and need to cross from bad to good somewhere. By Lemma \ref{crucial_Lemma} we see that the polynomial \begin{align*}
        f':=\left.\frac{f}{\prod_{K\nmid H}l_K}\right|_H
    \end{align*} 
    has the correct degree $\deg(f')=|\cH_{P'}|-\dim(P')-1$ and satisfies $\mu_x(f') \geq \ord_{P'}(x)$ for the polytope $P'=P\cap H$. Furthermore since $G$ was incident with $H$, the equation $l_G$ does not appear in the denominator here, so $f'$ also identically vanishes along the facet $G\cap H$ of $P'$. Recursively we reduce to the case $n=1$. Here $P$ is a line segment in $\R^1$ which has two facets, so the degree of $f$ is $2-1-1=0$ and $f$ is forced to be a non-zero constant. In particular $f$ can not vanish at any of the two facets of $P$.
\end{proof}

Finally we can now prove uniqueness, since by the previous lemma we know that Lemma \ref{crucial_Lemma} is applicable for any hyperplane $H\in \cH_P$. The only difficulty that is left is the fact that the vanishing order can of course at most fix a polynomial up to scaling. However, to utilize the theory of canonical forms for positive geometries we need to find one scaling such that simultaneously the residue at all vertices is $\pm 1$. We carry this out in the following proposition.

\begin{prop}
\label{dealing with scalings}
    Assume $f\in \C[X_0,\ldots,X_n]_{d-n-1}$ is homogeneous of degree $d-n-1$ and vanishes at each point $x\in \pp^n$ to order at least $\ord_P(L_x)$. Then up to scaling \begin{align*}
        \Omega_f(P)=\Omega(P).
    \end{align*}
\end{prop}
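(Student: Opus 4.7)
The plan is to induct on the dimension $n$ and show that after a suitable rescaling the form $\Omega_f(P)$ satisfies the recursive residue property characterizing the canonical form of the positive geometry $(\pp^n, P)$, so that uniqueness of the canonical form forces $\Omega_f(P) = c\,\Omega(P)$ for some $c \in \C$. The base case $n = 1$ is immediate: then $d = 2$, the degree of $f$ is zero and $f$ is a nonzero constant, so $\Omega_f(P)$ is automatically a scalar multiple of $\Omega(P)$.

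For the inductive step, I first note that the lemma immediately preceding the proposition ensures $f$ does not vanish identically on any facet hyperplane $H \in \cH_P$, so Lemma \ref{crucial_Lemma} applies to every such $H$ and produces a polynomial $f'_H$ of the correct degree satisfying the vanishing hypotheses of Theorem \ref{mainTh} for $P \cap H$, with $\Res_H(\Omega_f(P)) = \Omega_{f'_H}(P \cap H)$. The inductive hypothesis then yields constants $c_H \in \C$ with $\Omega_{f'_H}(P \cap H) = c_H \cdot \Omega(P \cap H)$, i.e.\
\[
\Res_H(\Omega_f(P)) = c_H \cdot \Omega(P \cap H) \qquad \text{for every } H \in \cH_P.
\]

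The crucial step will be to show that all the $c_H$ agree. For any two facets $H, H' \in \cH_P$ sharing a codimension-two face $F := P \cap H \cap H'$, the form $\Omega_f(P)$ has simple poles along the simple-normal-crossings divisor $H \cup H'$ near $F$, and standard multi-dimensional residue theory equates the iterated residues $\Res_F(\Res_H(\Omega_f(P)))$ and $\Res_F(\Res_{H'}(\Omega_f(P)))$ up to a sign depending only on the orientations of $H, H', F$, not on $f$. Applying the same identity to $\Omega(P)$ itself (where by the recursive definition both iterated residues equal $\Omega(F)$ up to the same sign) and comparing yields $c_H \cdot \Omega(F) = c_{H'} \cdot \Omega(F)$, hence $c_H = c_{H'}$. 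Since the facet-adjacency graph of $P$ (the $1$-skeleton of the combinatorial dual $P^\vee$) is connected, all of the $c_H$ will equal a common constant $c$.

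To finish, I would consider the $n$-form $\Omega_f(P) - c \cdot \Omega(P)$. It has vanishing residue along every facet hyperplane in $\cH_P$ and no poles elsewhere, so it is a global meromorphic $n$-form on $\pp^n$ without poles. Writing it as $g(X)\,\omega_{\pp^n}$ for a homogeneous rational function $g(X)$ of degree $-n-1$, the absence of poles forces $g$ to be a polynomial of this negative homogeneous degree, hence $g = 0$ and $\Omega_f(P) = c \cdot \Omega(P)$. The main obstacle I expect is the sign bookkeeping in the iterated-residue comparison: one must verify that the sign appearing when swapping the order of $\Res_H$ and $\Res_{H'}$ matches the sign implicit in the recursive residue definition of the canonical form, so that the $c_H$ come out genuinely equal rather than merely equal up to sign.
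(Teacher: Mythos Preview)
Your proposal is correct and follows essentially the same route as the paper: induct on $n$, apply Lemma~\ref{crucial_Lemma} together with the preceding non-vanishing lemma to produce the constants $c_H$, then equate them via iterated residues along adjacent facets and connectedness of the facet-adjacency graph. The only minor difference is the concluding step---the paper invokes uniqueness of the canonical form directly, whereas you subtract $c\,\Omega(P)$ and use that $\pp^n$ carries no nonzero holomorphic $n$-forms---and your proposed handling of the sign issue (comparing against $\Omega(P)$ so that the swap-sign cancels) is exactly how the paper's chain of equalities works.
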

\begin{proof}
    By Definition \ref{defi_positive_geom} and since polytopes are positive geometries with unique canonical form $\Omega(P)$, it suffices to show that there exists a scalar $c\in \C$ such that \begin{align*}
        \Res_H(c\Omega_f(P))=\Res_H\Omega(P).
    \end{align*} 
    holds for all $H\in \cH_P$.
    The case $n=0$ of a single point is trivial since $f$ is of degree zero here and can therefore be scaled to $\pm 1$.

    For the general case consider a facet hyperplane $H\in \cH_P$. Using Lemma \ref{crucial_Lemma} and by induction we find a scalar $c_H$ depending on $H$ such that 
    \begin{align*}
        \Res_H(c_H\Omega_f(P))=\Omega(P\cap H)=\Res_H\Omega(P).
    \end{align*}
    It remains to prove that this scalar $c_H$ can be chosen independently of $H$. In other words we need to show $c_H=c_G$ for any two facet hyperplanes $G,H\in \cH_P$. Since the incidence graph of a convex polytope is connected, we may further assume that $G,H$ are two incident facets. 
    Taking residue twice is skew-symmetric due to the different choice of orientation (see e.g. \cite[p.~27]{CattaniDickenstein2005Residues}). Hence taking residue twice in two different orders we get \begin{align*}
        c_G\Res_H\Res_G(\Omega_f(P))&=\Res_H\Res_G(c_G\Omega_f(P))\\&=\Res_H\Res_G(\Omega(P))\\&=-\Res_G\Res_H(\Omega(P))\\&= -\Res_G\Res_H(c_H\Omega_f(P))\\
        &=-c_H\Res_G\Res_H(\Omega_f(P))\\
        &=c_H\Res_H\Res_G(\Omega_f(P))
    \end{align*}
    which implies $c_H=c_G$.
\end{proof}
Let us conclude by deducing our main result Theorem \ref{thm:intro_main_allcomponents} from this.
 \begin{proof}[Proof of Theorem \ref{thm:intro_main_allcomponents}]
     The existence and the fact that Warrens adjoint of the dual polytope $\adj_P$ satisfy these restrictions was proved in Proposition \ref{prop:existence}. If $f,f'$ are two polynomials of the correct degree that satisfy the vanishing restrictions then by Proposition \ref{dealing with scalings} we find two scalars $c,c'$, such that \begin{align*}
         c\frac{f}{\prod_{G\in \cH_P}l_G}\omega_{\pp^n}= \Omega(P)= c'\frac{f'}{\prod_{G\in \cH_P}l_G}\omega_{\pp^n}
     \end{align*}
     which by clearing the denominators clearly implies $cf=c'f'$, i.e. $f$ and $f'$ agree up to scaling.
 \end{proof}
 
\section{Reduction to Interpolation in Dimension Zero } \label{sec: main_cor_Clemens}
In this section we will deduce Theorem \ref{thm:intro_main_thm} from Theorem \ref{thm:intro_main_allcomponents}. Throughout $P$ will be a fulldimensional convex polytope in $\pp^n$. Recall the following notation.
\begin{defin}
 Let $P \subset \pp^n$ be a convex polytope. We call the set
 \begin{equation*}
     \cR_0(P) := \left\lbrace x \mid x = \bigcap\cF_x \right\rbrace
 \end{equation*}
 the \emph{point-residual} of $P$. For $x\in \cR_0(P)$ we also write $\ord_P(x):=\ord_P(\{x\})$ for the order defined on all flats in Definition \ref{def:order_general}.
\end{defin}
As before we write $L_x = \bigcap\cF_x$, so the point-residual is defined by the condition $x = L_x$. Yet differently $\cR_0(P)$ is the union of the zero-dimensional part of the residual arrangement $\cR(P)$ of Kohn and Ranestad \cite{KohnRanestad2019AdjCurves} and the vertices of $P$. In our notation it is also the subset of $\cL(P)$ of spaces of dimension zero. The order at a point $x \in \cR_0(P)$ can be written in a slightly simpler form since compared to Definition \ref{def:order_general} we now only consider rank $n$ flats: \begin{align*}
    \ord_P(x)=\begin{cases}
        |\{H\in \cH_P \mid x \in H\}|-n & \text{if }x\text{ is a vertex of }P \\
        |\{H\in \cH_P \mid x \in H\}|-n+1 &\text{else}
    \end{cases}
\end{align*}

The difference between Theorem \ref{thm:intro_main_allcomponents} and
Theorem \ref{thm:intro_main_thm} only lies in the vanishing conditions for the adjoint that we postulate: Theorem \ref{thm:intro_main_allcomponents} gives vanishing conditions at flats of any rank, while Theorem \ref{thm:intro_main_thm} only considers flats of rank $n$, i.e. finitely many points in $\pp^n$. Hence the existence part is stronger in Theorem \ref{thm:intro_main_allcomponents}, but Theorem \ref{thm:intro_main_thm} significantly strengthens the uniqueness part. In particular we can immediately deduce the existence part of Theorem \ref{thm:intro_main_thm}.
\begin{cor}
\label{cor:existence}
    The adjoint polynomial $\adj_P$ satisfies \begin{align*}
        \mu_x(\adj_P)\geq \ord_P(x)
    \end{align*} for all $x\in \cR_0(P)$. Hence the existence statement in Theorem \ref{thm:intro_main_thm} holds true. 
\end{cor}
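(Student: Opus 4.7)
The plan is to observe that Corollary \ref{cor:existence} is simply the specialization of Theorem \ref{thm:intro_main_allcomponents} to the zero-dimensional elements of $\cL(P)$. First I would check that for a point $x \in \cR_0(P)$ the defining condition $x = L_x = \bigcap \cF_x$ means that $\{x\}$ is itself a member of $\cL(P)$, namely the projective linear space of dimension $0$ associated with the flat $\cF_x$. Consequently the two notions of order coincide: the number $\ord_P(x)$ appearing in the statement of Theorem \ref{thm:intro_main_thm} agrees with $\ord_P(L_x)$ from Definition \ref{def:order_general}. Indeed, in the zero-dimensional case the condition $\dim(L_x \cap P) = \dim(L_x)$ reduces to $x \in P$, which for $x \in \cR_0(P)$ is equivalent to $x$ being a vertex of $P$; this matches precisely the case split in the simplified formula for $\ord_P(x)$.

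Next I would invoke Theorem \ref{thm:intro_main_allcomponents} (in its reformulated shape as Theorem \ref{mainTh}), whose existence half has already been established by Proposition \ref{prop:existence}. That result yields $\mu_x(\adj_P) \geq \ord_P(L_x)$ for \emph{every} point $x \in \pp^n$, and not just for those lying in the point-residual. Restricting attention to $x \in \cR_0(P)$ and using the identification $\ord_P(x) = \ord_P(L_x)$ from the previous paragraph produces the claimed inequality.

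The existence statement in Theorem \ref{thm:intro_main_thm} is then immediate: $\adj_P$ is a homogeneous polynomial of the correct degree $d-n-1$ that satisfies the required vanishing bound at every point of $\cR_0(P)$, so it witnesses existence. No further ingredient is needed for this corollary, and there is no genuine obstacle here — the nontrivial content is packaged in Theorem \ref{thm:intro_main_allcomponents}. The real challenge, strengthening uniqueness so that the zero-dimensional vanishing conditions alone characterize $\adj_P$ up to scaling, is the subject of the remainder of Section \ref{sec: main_cor_Clemens} and is logically independent of the short argument sketched above.
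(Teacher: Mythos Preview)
Your proposal is correct and follows essentially the same approach as the paper, which simply says ``Apply Proposition \ref{prop:existence} only to the points $x \in \cR_0(P)$.'' The extra verification you give that $\ord_P(x) = \ord_P(L_x)$ is fine but is in fact definitional in the paper (see the sentence ``For $x\in \cR_0(P)$ we also write $\ord_P(x):=\ord_P(\{x\})$''), so no check is strictly needed.
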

\begin{proof}
    Apply Proposition \ref{prop:existence} only to the points $x \in \cR_0(P)$.
\end{proof}

We will subsequently show that the hypotheses in \Cref{thm:intro_main_allcomponents} are stronger than necessary for the uniqueness statement. Indeed, we show that it suffices to postulate the vanishing order in the finitely many points $x \in \cR_0(P)$ to deduce the correct vanishing order along any flat: 

\begin{thm} \label{thm:pointresidual}
 Let $P \subset \pp^n$ be a full-dimensional convex polytope with $d$ facets and let $f \in \C[X_0, \dots, X_n]_{d-n-1}$ be a polynomial satisfying
 \begin{equation*}
     \ord_x(f) \geq \ord_P(x)
 \end{equation*}
 for all $x \in \cR_0(P)$. Then
 \begin{equation*}
     \ord_x(f) \geq \ord_P(L_x)
 \end{equation*}
 for all $x \in \pp^n$.
\end{thm}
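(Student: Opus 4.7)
My strategy is to show that the hypothesis of Theorem \ref{thm:pointresidual} forces the stronger all-flat vanishing condition of Theorem \ref{thm:intro_main_allcomponents}; the conclusion then follows from Proposition \ref{dealing with scalings} (uniqueness) combined with Corollary \ref{cor:existence} (existence). Concretely, I would deduce $\mu_L(f) \geq \ord_P(L)$ for every $L \in \cL(P)$ starting only from the assumed inequalities at the zero-dimensional flats in $\cR_0(P)$.

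The natural approach is descending induction on $\codim_{\pp^n}(L)$. The base case $\codim L = n$ is the hypothesis, since then $L = \{x\} \in \cR_0(P)$. For the inductive step, fix $L$ of codimension $c < n$ and suppose the conclusion holds for all flats of strictly larger codimension. For each facet hyperplane $H \in \cH_P \setminus \cF_L$, the intersection $L \cap H$ is a flat in $\cL(P)$ of codimension $c+1$, so the inductive hypothesis gives $\mu_{L \cap H}(f) \geq \ord_P(L \cap H)$. Restricted to $L \cong \pp^{\dim L}$, this equips the polynomial $f|_L$ of degree $d - n - 1$ with prescribed vanishing orders along the codimension-$1$ subvarieties of $L$ of the form $L \cap H$. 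A Bezout-style divisibility argument applied to $f|_L$ should give $\mu_L(f) \geq 1$, and iterating this on the transverse derivatives $\partial^\alpha f$ for $|\alpha| < \ord_P(L)$ (each of which inherits an analogously reduced vanishing at points of $\cR_0(P)$ and, via the inductive hypothesis, along subflats) should yield the full bound $\mu_L(f) \geq \ord_P(L)$.

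I expect the main obstacle to be the quantitative part of this Bezout argument. One must verify that the total vanishing imposed on $f|_L$ along the subflats $L \cap H$—summed over distinct such subflats and counted with the correct multiplicity coming from the nullity function $\nl_{M_P}$ (since several hyperplanes $H \in \cH_P \setminus \cF_L$ can produce the same $L \cap H$)—strictly exceeds the degree $d - n - 1$ of $f|_L$, with slack exactly matching $\ord_P(L)$. The corresponding combinatorial identity should follow from the matroid structure of $M_P$ relative to $\cF_L$, with the $+1$ correction of Definition \ref{def:order_general} providing precisely the excess required. This is where the dichotomy between face-supporting and non-face-supporting flats of $P$ enters essentially, and its careful handling is, I expect, the core technical content of the proof.
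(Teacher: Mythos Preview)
Your proposal is correct and matches the paper's approach: induction on $\dim L_x$, feeding the bounds $\mu_{L_i}(f)\geq\ord_P(L_i)$ on the codimension-one subflats $L_i\subset L_x$ into a Bezout-type count (packaged in the paper as Corollary~\ref{cor: van_order_Mult}, which iterates by dividing out powers of $l_{L_x}$ rather than by differentiating---an equivalent mechanism) driven by the partition identity $\sum_i|\cF_i\setminus\cF_x|=d-|\cF_x|$, with the extra $+1$ in the non-face-supporting case supplied by Lemma~\ref{lem: convpol_noninc}. One minor wording correction: the reduced vanishing of $\partial^\alpha f$ along the $L_i$ comes directly from $\mu_{L_i}(\partial^\alpha f)\geq\mu_{L_i}(f)-|\alpha|$ together with the already-established bound for $f$, not from re-invoking the inductive hypothesis on $\partial^\alpha f$ itself (which has the wrong degree).
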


From this together with Theorem \ref{thm:intro_main_allcomponents}, the uniqueness part of Theorem \ref{thm:intro_main_thm} follows immediately. Together with Corollary  \ref{cor:existence} this concludes the proof of Theorem \ref{thm:intro_main_thm}.

We will now prove \Cref{thm:pointresidual}. The main idea is that the low degree of $f$ means that if $f$ vanishes to a high enough order at a large enough number of points in some linear subspace, then $f$ must vanish to a high order along the whole subspace. We begin with some preparatory lemmata.

\begin{lem} \label{lem: linear_van_order_Mult}
Fix $D,k\in \N$.
 Let  $f \in \R[x_0, \dots, x_n]_D$ be homogeneous of degree $D$ and let $H_1, \dots, H_{k}$ be pairwise distinct hyperplanes in $\pp^{n}$ such that
 \begin{equation*}
     \sum_{i=1}^k \mu_{H_i}(f) > D.
 \end{equation*}
  Then $f = 0$.
\end{lem}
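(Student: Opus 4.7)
The proof should be very short: it just boils down to the fact that the vanishing ideal of a hyperplane $H_i\subseteq \pp^n$ is principal, generated by the defining linear form $l_{H_i}$.

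My plan is the following. First I would unpack the definition of vanishing order: setting $m_i := \mu_{H_i}(f)$, the condition $f \in \cI(H_i)^{m_i}$ together with $\cI(H_i) = (l_{H_i})$ immediately yields $l_{H_i}^{m_i} \mid f$ in $\R[x_0,\dots,x_n]$.

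Next, since the hyperplanes $H_1,\dots,H_k$ are pairwise distinct, the linear forms $l_{H_i}$ are pairwise non-proportional, hence pairwise coprime irreducibles in the UFD $\R[x_0,\dots,x_n]$. Therefore their powers $l_{H_1}^{m_1},\dots,l_{H_k}^{m_k}$ are still pairwise coprime, and the divisibilities $l_{H_i}^{m_i}\mid f$ combine to give
\begin{equation*}
    \prod_{i=1}^k l_{H_i}^{m_i} \;\Big|\; f.
\end{equation*}

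Finally, comparing degrees, the left hand side has degree $\sum_{i=1}^k m_i > D = \deg f$, which is only possible if $f = 0$. The only subtlety to watch is that the argument requires $\R[x_0,\dots,x_n]$ to be a UFD with the $l_{H_i}$ prime, which is standard; no hard step is anticipated.
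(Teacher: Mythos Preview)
Your proposal is correct and essentially identical to the paper's proof: both use that $\cI(H_i)=(l_{H_i})$ to get $l_{H_i}^{m_i}\mid f$, combine these via coprimality of the $l_{H_i}$ (the paper phrases this as $\bigcap_i \langle l_i^{m_i}\rangle = \langle \prod_i l_i^{m_i}\rangle$), and then compare degrees to force $f=0$.
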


\begin{proof}
For $i=1,\ldots,k$ let $l_i$ be a linear equation cutting out $H_i$. As the $H_i$ are distinct, the $l_i$ don't share any factors, so by definition of multiplicity
\begin{align*}
    f\in \bigcap_{i=1}^k \langle l_i\rangle^{\mu_{H_i}(f)}=
    \bigcap_{i=1}^k \langle l_i^{\mu_{H_i}(f)}\rangle
    =\langle \prod_{i=1}^kl_i^{\mu_{H_i}(f)}\rangle
\end{align*}  
where in the last equality we use that the intersection of principle ideals equals the ideal generated by the least common multiple of their generators. The ideal on the right is principal with a generator of degree $\sum_{i=1}^k\mu_{H_i}>D=\deg(f)$, hence $f=0$.

\end{proof}
Applying Lemma \ref{lem: linear_van_order_Mult} inductively to restrictions of $f$, we also get a criterion for $f$ vanishing to a certain order along a linear subspace. We carry this out in the following corollary which will be our main tool for the proof of Theorem \ref{thm:pointresidual}.
\begin{cor} \label{cor: van_order_Mult}
 Fix $D,k\in \N$, let $L\subset \pp^{n}$ be a hyperplane and let $f \in \C[x_0, \dots, x_{n}]_D$ be homogeneous of degree $D$. Furthermore let $H_1, \dots, H_k$ be pairwise distinct hyperplanes in $L$. Assume that there exists $m \in \N$ with
 \begin{equation*}
     \sum_{i=1}^k (\mu_{H_i}(f)-(m-1)) > D - (m -1).
 \end{equation*}
 Then $\mu_L(f) \geq m$.
\end{cor}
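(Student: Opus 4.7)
The natural approach is induction on $m$, with the base case being a direct application of Lemma~\ref{lem: linear_van_order_Mult} and the inductive step proceeding by factoring out a power of the defining form of $L$.

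For the base case $m=1$, the hypothesis reads $\sum_{i=1}^k \mu_{H_i}(f) > D$. I will restrict $f$ to $L\cong \pp^{n-1}$ and view the $H_i$ as hyperplanes in $L$. The key observation is that restriction can only increase vanishing orders: if $f\in I(H_i)^{\mu}$ in $\C[x_0,\dots,x_n]$, then $f|_L$ lies in the $\mu$-th power of the ideal of $H_i$ in the coordinate ring of $L$, so $\mu_{H_i}(f|_L)\geq \mu_{H_i}(f)$. Hence $\sum_i \mu_{H_i}(f|_L) > D = \deg(f|_L)$, and Lemma~\ref{lem: linear_van_order_Mult} applied on $L$ forces $f|_L = 0$, which is precisely $\mu_L(f)\geq 1$.

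For the inductive step, I first note that, assuming $k\geq 1$ (the case $k=0$ is vacuous, since the hypothesis then demands $m>D+1$, in which case the conclusion has to be read via $f=0$), the hypothesis for $m$ implies the hypothesis for $m-1$, so by induction $\mu_L(f)\geq m-1$. Let $l$ be a linear form cutting out $L$; then I may write $f = l^{m-1}\, g$ with $g$ homogeneous of degree $D-(m-1)$, and the goal becomes to show $l\mid g$. Since $H_i\subseteq L = V(l)$, one has $\mu_{H_i}(l)=1$, so $\mu_{H_i}(g) = \mu_{H_i}(f)-(m-1)$. The hypothesis then states exactly that $\sum_i \mu_{H_i}(g) > D-(m-1) = \deg(g)$. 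Restricting $g$ to $L$ and using the same restriction inequality as in the base case, Lemma~\ref{lem: linear_van_order_Mult} applied to $g|_L$ on $L$ yields $g|_L=0$, hence $l\mid g$, so $l^m\mid f$ and $\mu_L(f)\geq m$.

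I do not anticipate any genuine obstacle: all the work is packaged inside Lemma~\ref{lem: linear_van_order_Mult}, and the two ingredients beyond it are entirely standard, namely the multiplicativity of the vanishing order $\mu_{H_i}(l^{m-1}g)=(m-1)+\mu_{H_i}(g)$ (which uses $H_i\subseteq L$) and the monotonicity $\mu_{H_i}(g|_L)\geq \mu_{H_i}(g)$ under restriction. The only mild care required is the bookkeeping to ensure that the strict inequality in the hypothesis survives the subtraction of $m-1$ at the factoring step, but this is immediate from the form in which the hypothesis is stated.
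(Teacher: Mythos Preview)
Your proof is correct and follows essentially the same route as the paper: induction on $m$, base case by applying Lemma~\ref{lem: linear_van_order_Mult} to $f|_L$, and the inductive step by factoring out $l^{m-1}$ and applying the base case to the quotient. Your write-up is in fact slightly more explicit than the paper's, since you spell out the monotonicity $\mu_{H_i}(f|_L)\geq\mu_{H_i}(f)$ under restriction and the multiplicativity $\mu_{H_i}(l^{m-1}g)=(m-1)+\mu_{H_i}(g)$, both of which the paper uses tacitly.
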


\begin{proof}
 We use induction on $m$. If $m = 1$, then we simply apply \Cref{lem: linear_van_order_Mult} to $f|_L$. For $m \geq 2$ we have
 \begin{equation*}
     \sum_{i=1}^k (\mu_{H_i}(f)-(m-2)) \geq \sum_{i=1}^k (\mu_{H_i}(f)-(m-1)) +1 > D - (m - 1) + 1 = D-(m-2),
 \end{equation*}
 and so $\mu_L(f) \geq m-1$ by the inductive hypothesis. In particular, fixing a linear equation $l_L$ of $L$, we see that $f' := \frac{f}{l_L^{m-1}}$ is a polynomial of degree $D'=D-(m-1)$. By assumption
 \begin{equation*}
     \sum_i \mu_{H_i}(f') = \sum_i (\mu_{H_i}(f)-(m-1)) > D - (m - 1)=D',
 \end{equation*}
 so $f'|_L = 0$ by \Cref{lem: linear_van_order_Mult} applied to $f'|_L$. This implies $\mu_L(f)=\mu_L(f')+m-1\geq 1+m-1=m$.
\end{proof}

Finally we need two small statements about convex polytopes.
\begin{lem}
\label{lem: convpol_noninc}
Let $P\subseteq \pp^n$ be a fulldimensional convex polytope.
 Consider a linear space $L \subseteq \pp^n$ of dimension $l$ such that $\dim(L\cap P)< \dim(L)$. Then $L\cap P$ contains at most one face of $P$ of dimension $l-1$.
\end{lem}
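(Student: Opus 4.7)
The plan is to prove the lemma by contradiction. Suppose that $L\cap P$ contains two distinct faces $F_1,F_2$ of $P$, each of dimension $l-1$. I will show that this forces $\dim(L\cap P)\geq l$, contradicting the hypothesis $\dim(L\cap P)<l$.

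The argument rests on two elementary observations. First, for any face $F$ of a convex polytope $P$ one has $P\cap \mathrm{aff}(F)=F$: writing $F=P\cap H$ for a supporting hyperplane $H\supseteq \mathrm{aff}(F)$ immediately gives $P\cap \mathrm{aff}(F)\subseteq P\cap H=F$, while the reverse inclusion is automatic. In particular, two distinct faces of $P$ have distinct affine spans, so $\mathrm{aff}(F_1)\neq \mathrm{aff}(F_2)$. Second, passing to the affine chart containing $P$ (which $L$ meets, since $L\cap P$ is non-empty, giving an $l$-dimensional affine space), we observe that any two distinct affine hyperplanes inside an $l$-dimensional affine space --- whether they meet transversally in codimension two or are parallel --- have affine join equal to the whole ambient $l$-space. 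Since $\mathrm{aff}(F_2)\not\subseteq \mathrm{aff}(F_1)$ (else equality of $(l-1)$-dimensional spaces would force $\mathrm{aff}(F_1)=\mathrm{aff}(F_2)$) and $F_2$ is full-dimensional in $\mathrm{aff}(F_2)$, we can pick a point $p\in F_2\setminus \mathrm{aff}(F_1)$. The set $\conv(F_1\cup\{p\})\subseteq \conv(F_1\cup F_2)$ is then a pyramid of dimension $l$. By convexity of both $P$ and $L$ we have $\conv(F_1\cup F_2)\subseteq L\cap P$, which yields $\dim(L\cap P)\geq l$, the desired contradiction.

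There is no serious obstacle: the whole statement reduces to clean convex/affine geometry. The only genuinely non-trivial ingredient is the identity $P\cap \mathrm{aff}(F)=F$, which is where the hypothesis that $F_1,F_2$ are \emph{faces} (rather than arbitrary $(l-1)$-dimensional subpolytopes of $P$) enters in an essential way; without it, two distinct $(l-1)$-dimensional pieces of $L\cap P$ could a priori share an affine span and the dimension bump above would fail. A minor bookkeeping point is the passage between projective and affine dimensions, handled by restricting to the affine chart containing $P$.
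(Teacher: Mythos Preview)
Your proof is correct and follows the same approach as the paper: assume two distinct $(l-1)$-faces lie in $L\cap P$, take their convex hull, and derive $\dim(L\cap P)\geq l$. The paper's argument is a terse two lines that simply asserts the convex hull has dimension at least $l$; your version supplies the missing justification for that step via the identity $P\cap\mathrm{aff}(F)=F$, which is a welcome addition.
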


\begin{proof}
Assume the contrary. Then $L\cap P$ contains two distinct $(l-1)$-faces of $P$. Necessarily it will also contain their convex hull, which has dimension at least $l$. In particular, the intersection $L \cap P$ has dimension at least $l$, which is a contradiction to our assumption.
\end{proof}

We now have all the tools for the main proof of this section.

\begin{proof}[Proof of \Cref{thm:pointresidual}]
Let $f \in \C[X_0, \dots, X_n]_{d-n-1}$ vanish in the point-residual, i.e.
\begin{equation*} \label{eq:pointresidual}
    \mu_x(f) \geq \ord_P(x)
\end{equation*}
for all $x \in \cR_0(P)$. We want to prove that \begin{align*}
    \mu_{L_x}(f)\geq \ord_P(L_x)
\end{align*}must already hold for all $x \in \pp^n$. We will do so by induction on
\begin{equation*}
    r = \dim L_x = n-\rk(\cF_x).
\end{equation*}
The case $r = 0$ describes the vanishing in the point-residual, which is covered by our hypothesis. Thus let $r > 1$. Fix an enumeration $(\cF_i)_{i=1}^k$ of all flats of $M_P$ satisfying $\cF_x \subseteq \cF_i$ and $\rk(\cF_x) = \rk(\cF_i) - 1$. Geometrically, $L_i:=\bigcap\cF_i$ is a hyperplane in $L_x$.

We want to invoke \Cref{cor: van_order_Mult} to prove that $f$ vanishes in $x$ to the right order. As suggested by the definition of $\ord_P(L_x)$, we distinguish between two cases depending on whether the inequality $\dim(L_x\cap P)\leq \dim (L_x)$ is strict.
\begin{enumerate}
    \item Assume that $\dim(L_x\cap P)= \dim (L_x)$ and hence $\ord_P(L_x)=\nl_{M_P}(\cF_x)$. We wish to apply Corollary \ref{cor: van_order_Mult} to $D=d-n-1,m=\nl_{M_P}(\cF_x)$ and $H_i=L_i, i=1,\ldots,k$. We obtain that $\mu_{L_x}(f)\geq \nl_{M_P}(\cF_x)=\ord_P(L_x)$ is implied by 
    \begin{equation*}
    \begin{split}
        \sum_{i=1}^k \left(\mu_{L_i}(f) - (\nl_{M_P}(\cF_x) - 1)\right) &\geq d-n-1 - (\nl_{M_P}(\cF_x) - 1)
    \end{split}
    \end{equation*}
    To see that this is satisfied, we first note that by our inductive hypothesis and by definition of $\ord_P(L_i)$ we have
    \begin{equation*}
        \mu_{L_i}(f) \geq \ord_P(L_i) \geq \nl_{M_P}(\cF_i) 
    \end{equation*}
    for all $i$. Therefore
    \begin{equation*}
    \begin{split}
        \sum_i \left( \mu_{L_i}(f) - (\nl_{M_P}(\cF_x) - 1) \right) &\geq \sum_i \left( \nl_{M_P}(\cF_i) - \nl_{M_P}(\cF) + 1 \right) = \\
        &= \sum_i \left(|\cF_i| - |\cF_x| + \rk(\cF_x) - \rk(\cF_i) + 1\right) = \\
        &= \sum_i |\cF_i \setminus \cF_x| = d  - |\cF_x| > \\
        &> d - n - 1 - (\nl_{M_P}(\cF_x) - 1).
    \end{split}
    \end{equation*}
    The first equality in the penultimate line comes from the fact that the sets $\cF_i\setminus \cF_x$ form a partition of $\cH_P\setminus \cF_x$. For the last inequality we use that since $r > 0$ we have $n > \rk(\cF_x) = |\cF_x| - \nl_{M_P}(\cF_x)$.
    
    \item Now assume that $\dim(L_x\cap P)< \dim (L_x)$ and therefore $\ord_P(L_x)=\nl_{M_P}(\cF_x)+1$. This time we apply Corollary \ref{cor: van_order_Mult} to $D=d-n-1,m=\nl_{M_P}(\cF_x)+1$ and $H_i=L_i, i=1,\ldots,k$. We obtain that $\mu_{L_x}(f)\geq \nl_{M_P}+1(\cF_x)=\ord_P(L_x)$ is implied by 
    \begin{equation*}
    \begin{split}
        \sum_{i=1}^k \left(\mu_{L_i}(f) - \nl_{M_P}(\cF_x)\right) &\geq d-n-1 - \nl_{M_P}(\cF_x)
    \end{split}
    \end{equation*}

    As in case (1) we have
    \begin{equation*}
        \mu_{L_i}(f) \geq \ord_P(L_i) \geq \nl_{M_P}(\cF_i) 
    \end{equation*}
    for all $i$. The second inequality by definition is an equality precisely when $\dim(L_i\cap P)=\dim(L_i)$. By \Cref{lem: convpol_noninc} this can happen for at most one index $i\in \{1,\ldots,k\}$.  For the remaining indices, the inequality is strict. Therefore we conclude as in the first case:
    \begin{equation*}
    \begin{split}
        \sum_i \left( \mu_{L_i}(f) - \nl_{M_P}(\cF_x) \right) \geq &\sum_i \left( \nl_{M_P}(\cF_i) + 1 - \nl_{M_P}(\cF_x) \right) - 1 = \\
        &= \sum_i \left(|\cF_i| - |\cF_x|\right) - 1 = \sum_i \left( |\cF_i \setminus \cF_x| \right) - 1 = \\
        &= d - |\cF_x| - 1 \\&> d - n - 1 - \nl_{M_P}(\cF_x).
    \end{split} 
    \end{equation*}
\end{enumerate}
\end{proof}

\section{Examples and Counterexamples} \label{sec: exmpls}
In this section we collect some interesting examples. We also provide counterexamples to many natural generalizations and strengthenings of Theorem \ref{thm:intro_main_thm} and Theorem \ref{thm:intro_main_allcomponents} that one might hope for. We start by showcasing how $\cR_0(P)$ can be computed in {\tt Macaulay2} \cite{M2}.

Let $P\subseteq \pp^n$ be a full-dimensional polytope with $d$ facets, then after fixing coordinates on $\pp^n$ we can record the homogeneous equations of the $d$ facet hyperplanes of $P$ in an $(n+1)\times d$-matrix $F$. For the purpose of using a computer algebra system we assume that the entries of $F$ are rational.
Using the {\tt Macaulay2} \cite{M2} package "Matroids" (\cite{M2PackageMatroids, MatroidsSource}),  we can compute the point residual $\cR_0(P)$ using the "hyperplanes" command:

\beginOutput
i1 : needsPackage "Matroids";\\
i2 : M = matroid F\\
i3 : L = hyperplanes M\\
i4 : R\_0 = transpose matrix apply(L, j-> flatten entries gens \\ \hspace{1cm} kernel((transpose submatrix(F,toList j))))  
\endOutput

The output is a matrix whose columns are the coordinates of the points in $\cR_0(P)$.

\begin{ex}
    Let $P$ be the three-dimensional octahedron, i.e. \begin{align*}
        P=\conv\left(\begin{pmatrix}
            1 \\ 0 \\ 0
        \end{pmatrix},\begin{pmatrix}
            -1 \\ 0 \\ 0
        \end{pmatrix},\begin{pmatrix}
            0 \\ 1 \\ 0
        \end{pmatrix},\begin{pmatrix}
            0 \\ -1 \\ 0
        \end{pmatrix},\begin{pmatrix}
            0 \\ 0 \\ 1
        \end{pmatrix},\begin{pmatrix}
            0 \\ 0 \\ -1
        \end{pmatrix}\right)\subseteq \R^3\subseteq \pp^3
    \end{align*}
The homogeneous equations of the $8$ facets of $P$ are (up to scaling) the columns of the matrix \begin{align*}
    F=\left(\!\begin{array}{cccccccc}
      -1&1&-1&1&-1&1&-1&1\\
      -1&-1&1&1&-1&-1&1&1\\
      -1&-1&-1&-1&1&1&1&1\\
      -1&-1&-1&-1&-1&-1&-1&-1
      \end{array}\!\right)
\end{align*}
After running the commands above for this $F$ we obtain a $4\times 20$ matrix ${\tt R\_0}$.
 The columns of the matrix ${\tt R\_0}$ are the projective coordinates of all 20 points in the point residual $\cR_0$: \begin{align*}
\hspace{-1cm}
\left(\!\begin{array}{cccccccccccccccccccc}
       0&0&0&1&-1&-1&-1&1&1&1&1&-1&1&-1&-1&0&0&1&-1&0\\
       0&1&-1&0&0&1&1&-1&1&1&1&-1&0&0&1&1&-1&-1&-1&0\\
       1&0&1&0&1&0&1&1&-1&1&0&1&1&0&-1&1&0&-1&-1&-1\\
       1&1&0&1&0&0&1&1&1&1&0&1&0&1&1&0&1&1&1&1
       \end{array}\!\right) 
\end{align*} 
The orders $\ord_P(x)$ of these 20 points are given by \begin{align*}
    (1, 2, 1, 1, 1, 1, 2, 1, 1, 1, 2, 2, 2, 1, 1, 2, 1, 1, 1, 1).
\end{align*}
Now we can set up a linear system to solve for the coefficients of a homogeneous degree $8-3-1=4$ polynomial in $4$ variables which vanishes at the 20 columns of the matrix ${\tt R\_0}$ and whose first partial derivatives vanish at the 6 special columns with indices 2, 7, 11, 12, 13, 16, where the order is $\ord_P(x)=2$. Up to scaling we find that the unique solution to this linear system is given by the coefficients of the adjoint\begin{align*}
    \adj_P(X)&=-X_0^4+2X_0^2X_1^2+2X_0^2X_2^2-2X_0^2X_3^2-X_1^4+2X_1^2X_2^2-2X_1^2X_3^2-X_2^4 \\&-2X_2^2X_3^2+3X_3^4.
\end{align*}
\end{ex}

The adjoint of a pyramid is a cone with apex the tip of the pyramid over the adjoint of the base of the pyramid. This process usually introduces a singularity of the adjoint and we can read this off from the  order at the tip. The next example carries this out for a pyramid over a regular pentagon.

\begin{ex}
    \begin{enumerate}
        \item Let $P_1\subseteq \R^2$ be a regular pentagon. Then $\cR_0(P_1)$ consists of the five vertices $v_1,\dots,v_5$ with order $\ord_{P_1}(v_i)=0$ and the five points $x_1,\ldots,x_5$ obtained by intersecting two non-adjacent facets with order $\ord_{P_1}(x_i)=1$. Since $P_1$ is regular, the points $x_1,\ldots,x_5$ lie on a perfect circle which is the adjoint hypersurface $A_{P_1}=\cV(\adj_{P_1}(X))$, see the left image in Figure \ref{fig:penta_adj}.
    \item Now let $P_2$ be the pyramid over the regular pentagon $P_1$ in $\pp^3$. We have $\cR_0(P_2)=\cR_0(P_1)\cup\{v\}$, where $v$ is the tip of the pyramid $P_2$. The order at the tip is $\ord_{P_2}(v)=5-3=2$ since five facets pass through this vertex. We immediately know that the adjoint hypersurface $A_{P_2}$ must have at least one singular point. Clearly the cone through the tip $v$ over the adjoint line of the pentagon $P_1$ satisfies the correct vanishing conditions at all points in $\cR_0(P_2)$, so by Theorem \ref{thm:intro_main_thm} this must be the adjoint hypersurface of $P_2$, see the right image of Figure \ref{fig:penta_adj}.
    \begin{figure}
		\includegraphics[height=3cm]{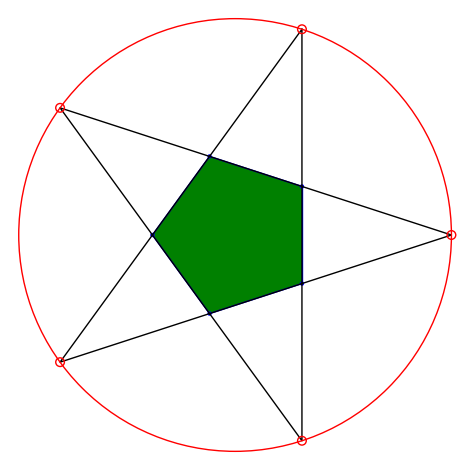}
        \includegraphics[height=3cm]{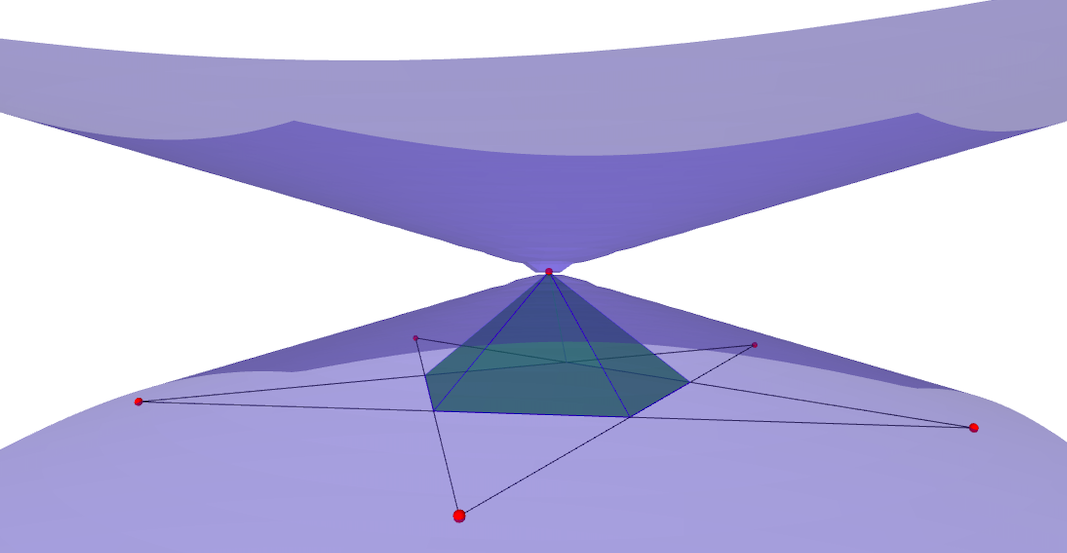}
        \caption{Left: The adjoint of the regular pentagon $P_1$ is a perfect circle through 5 points. Right: The adjoint of the pyramid $P_2$ is the cone over the adjoint of the pentagon $P_1$ through the tip of the pyramid $P_2$. The tip is a singularity.}
        \label{fig:penta_adj}
	\end{figure}
    \item Finally we may cut off the tip of the pyramid $P_2$ introducing a new facet that is parallel to its base. This yields the polytope $P_3$ depicted in the top of Figure \ref{fig:stump}. We have $\cR_0(P_3)=\cR_0(P_2)\cup\{y_1,\ldots,y_5\}$ where the $y_i$ are the five points at infinity obtained by intersecting the two parallel facets of $P_3$ with any of the remaining 5 facets. We have $\ord_{P_3}(y_i)=1$ and for the tip $v$ of $P_2$ we now have $\ord_{P_3}(v)=\ord_{P_2}(v)+1=3$, since $v$ no longer is a face of $P_3$. Compared to (2) we need one more order of vanishing at $v$ and we need to vanish at $y_1,\ldots,y_5$. Satisfying these additional conditions is possible, since the adjoint now has degree 3. Since all the points $v,y_1,\ldots,y_5$ lie on a hyperplane, these conditions can be satisfied by multiplying the adjoint of $P_2$ with the linear form cutting out this plane. Hence the adjoint hypersurface $A_{P_3}$ is reducible with two components: $A_{P_2}$ and a hyperplane (see the bottom image in Figure \ref{fig:stump}).
    \begin{figure}
        \includegraphics[height=3cm]{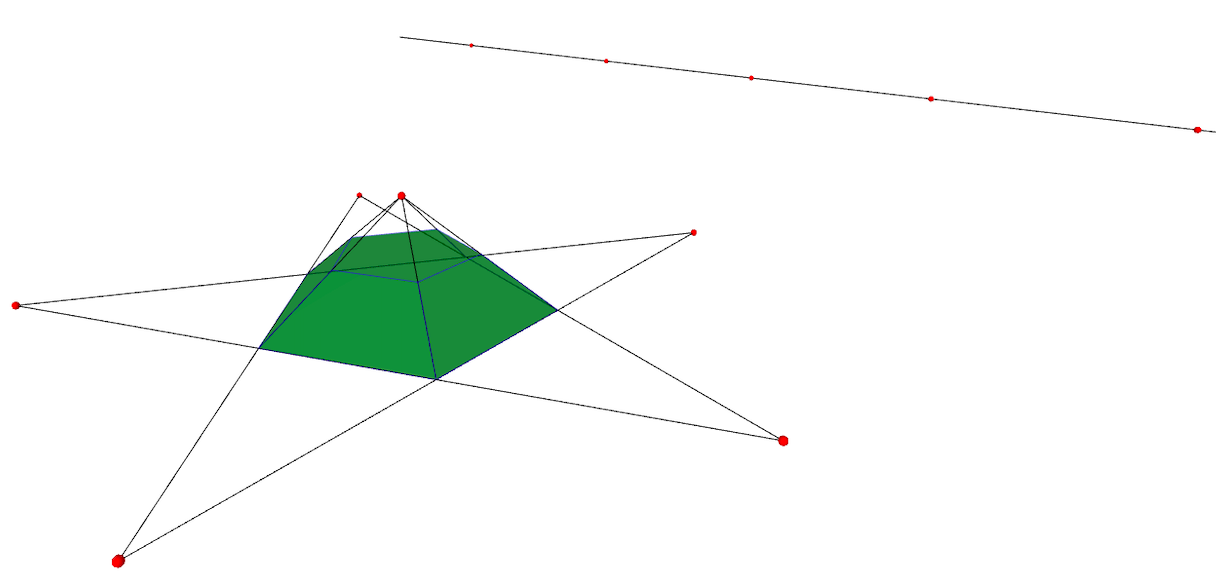}
        \includegraphics[height=3cm]{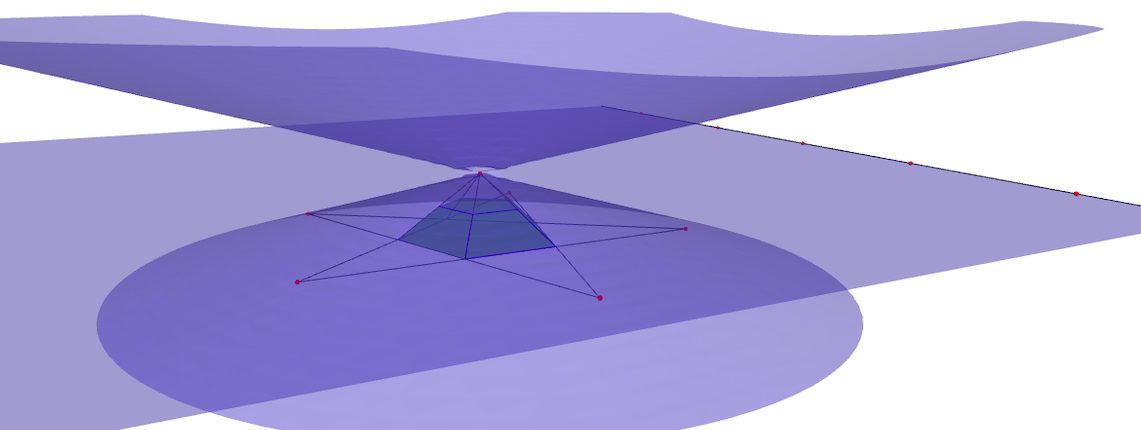}
        \caption{Top: The polytope $P_3$ and its residual points of order $>0$. The line with 5 points depicted in the back is at infinity. Bottom: The adjoint of $P_3$ factors through the adjoint of $P_2$ and a hyperplane}
        \label{fig:stump}
	\end{figure}
    \end{enumerate}
\end{ex}

The next examples showcase that the order of vanishing which we require at residual points is in general only a lower bound. Furthermore the adjoint can also have singular points entirely outside the union of linear spaces $L\in \cL(P)$ that are intersections of (at least one) facet hyperplanes of $P$.

\begin{ex}
 Consider a polytope $P$ with simple hyperplane arrangement and combinatorial type as given in \cite[Table 1, row 7]{KohnRanestad2019AdjCurves} (see \Cref{fig:unpredictedSglty} for a sketch of the combinatorial type and the residual arrangement). Its adjoint hypersurface $A$ is a cubic surface with a singularity (at least) in the unique point $x$ where three residual lines meet. This follows because the three lines span a three-dimensional tangent space to $A$. Since $\dim(A) = 2$, this means that $A$ cannot be smooth in this point. At the same time $x \in \cR_0(P)$ and its vanishing order is $\ord_P(x) = 1$, since $x$ is not a vertex and $P$ is simple.
 \begin{figure}[h]
     \centering
     \includegraphics[width=0.6\linewidth, trim={0 0 8cm 0}, clip]{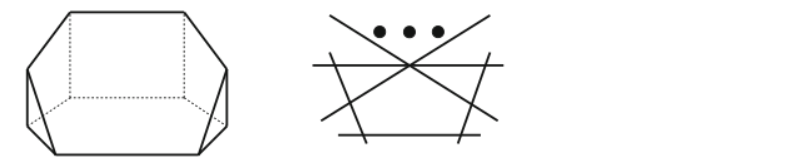}
     \caption{A combinatorial type of a three-dimensional polytope with simple hyperplane arrangement (left). Its residual arrangement indicates a singularity at the point where three lines meet (images taken from \cite[Table 1]{KohnRanestad2019AdjCurves}).}
     \label{fig:unpredictedSglty}
 \end{figure}
\end{ex}

\begin{ex}
 Consider a regular hexagon in the plane. Its adjoint is a union of a circle and the line at infinity. The two (complex-valued) points where these two components meet are singular, which, however, is not seen in the order of these points, since the hexagon has simple hyperplane arrangement.
\end{ex}

\begin{ex}
 Consider the regular cube $P \subset \R^3 \subset \pp^3$. Its residual arrangement comprises three lines $L_1, L_2, L_3$ contained in the plane at infinity. Following \Cref{def:order_general} they all have vanishing order $\ord_P(L_i) = \nl_{M_P}(L_i) + 1 = 1$.

 The point residual $\cR_0(P)$ of $P$ comprises the six vertices with vanishing order 0 (since $P$ is simple as a polytope) and three points in the plane at infinity. Each of the latter has vanishing order $\ord_P(x) = 2$, since it is contained in four facet hyperplanes. From this we conclude that the adjoint hypersurface of $P$ equals the plane at infinity, counted with multiplicity 2. In particular, each point on the adjoint is singular.
\end{ex}

\begin{ex}
 Consider a polytope $P$ with simple hyperplane arrangement in $\R^n$ where $4 \leq n$ and assume that its adjoint hypersurface has degree two. It is shown in \cite[Proposition 3.3]{BrueserKummerPavlov25Adjoints} that if $\cR(P)$ contains a linear subspace of codimension two in $\pp^n$, then the adjoint hypersurface has a determinantal representation and thus is singular by \cite[\S 4.1.1]{Dolgachev2012CAG}. However, by simplicity of $P$, these singularities can not be predicted using our results, since we have $\ord_P(x) \leq 1$ for all $x \in \pp^n$.

 For a concrete example consider the 4-polytope below, given by its defining inequalities, which is obtained by truncating two vertices of a scaled version of the standard 4-simplex:
 \begin{equation*}
 \begin{split}
     x_1 \geq 0, \quad
     x_2 \geq 0, \quad
     x_3 \geq 0, \quad
     x_4 \geq 0 \\
     x_1 + x_2 + x_3 + x_4 \leq 4 \\
     2x_1 + x_3 \leq 6, \quad 2x_2 - x_4 \leq 6
 \end{split}
 \end{equation*}
 The facets obtained by the truncation share no vertices, hence their intersection is residual and has codimension 2. The adjoint polynomial equals
 \begin{equation*}
     \alpha = 72x_0^2-18x_0x_1-18x_0x_2+4x_1x_2-12x_0x_3+3x_2x_3+12x_0x_4-3x_1x_4-2x_3x_4,
 \end{equation*}
 which is singular in the point $(1:0:0:6:-6)$.
\end{ex}

\bibliographystyle{alpha}
\bibliography{bibliography_reduced}

\end{document}